\documentclass{article}
\usepackage{amssymb}
\usepackage{amsmath}
\usepackage{amsthm}
\usepackage{epsfig}
\usepackage{colonequals}
\usepackage{enumerate}
\usepackage{hyperref}
\usepackage{float}

\newtheorem{theorem}{Theorem}
\newtheorem{proposition}{Proposition}
\newtheorem{corollary}[theorem]{Corollary}
\newtheorem{lemma}[theorem]{Lemma}

\newtheorem{problem}[theorem]{Problem}
\newcommand{\eA}{\mathcal{A}}
\newcommand{\eB}{\mathcal{B}}
\newcommand{\FF}{\mathcal{F}}
\newcommand{\shm}{\,\triangledown\,}
\newcommand{\legendre}[2]{\genfrac{(}{)}{}{}{#1}{#2}}

\title{$1$-subdivisions, the fractional chromatic number and the Hall ratio}
\author{%
     Zden\v{e}k Dvo\v{r}\'ak\thanks{Computer Science Institute (CSI) of Charles University,
           Malostransk{\'e} n{\'a}m{\v e}st{\'\i} 25, 118 00 Prague, 
           Czech Republic. E-mail: \protect\href{mailto:rakdver@iuuk.mff.cuni.cz}{\protect\nolinkurl{rakdver@iuuk.mff.cuni.cz}}.
           Supported by the project 17-04611S (Ramsey-like aspects of graph
	   coloring) of Czech Science Foundation.}\and
     Patrice Ossona de Mendez\thanks{Centre d'Analyse et de Math\'ematiques Sociales (CNRS, UMR 8557), Paris, France
	and 
Computer Science Institute of Charles University (IUUK), Prague, Czech Republic
    and 
 Department of Mathematics, Zhejiang Normal University, Jinhua, China.  
 E-mail: \protect\href{mailto:pom@ehess.fr}{\protect\nolinkurl{pom@ehess.fr}}.
                Supported by grant ERCCZ LL-1201 and by the European Associated Laboratory ``Structures in
Combinatorics'' (LEA STRUCO).}\and
     Hehui Wu\thanks{Shanghai Center for Mathematical Sciences, Shanghai, China. E-mail:\protect\href{mailto:hhwu@fudan.edu.cn}{\protect\nolinkurl{hhwu@fudan.edu.cn}}
     }
}
\date{\today}
\begin{document}
\maketitle

\begin{abstract}
The \emph{Hall ratio} of a graph $G$ is the maximum of $|V(H)|/\alpha(H)$
over all subgraphs $H$ of $G$.  It is easy to see that the Hall ratio of a graph is a lower
bound for the fractional chromatic number.  It has been asked whether conversely,
the fractional chromatic number is upper bounded by a function of the Hall ratio.
We answer this question in negative, by showing two results of independent
interest regarding $1$-subdivisions (the \emph{$1$-subdivision} of a graph is
obtained by subdividing each edge exactly once).
\begin{itemize}
\item For every $c>0$, every graph of sufficiently large average
degree contains as a subgraph the $1$-subdivision of a graph of fractional chromatic number
at least $c$.
\item For every $d>0$, there exists a graph $G$ of average degree at least $d$
such that every graph whose $1$-subdivision appears as a subgraph of $G$ has
Hall ratio at most $18$.
\end{itemize}
We also discuss the consequences of these results in the context of graph classes with
bounded expansion.
\end{abstract}

\section{Introduction}

The ordinary chromatic number $\chi(G)$ of a graph $G$ (the minimum number of colors needed
to color the vertices so that adjacent vertices have distinct colors) is among the most studied
graph parameters, inspiring many variations and generalizations.  Among the most natural ones
is the \emph{fractional chromatic number} $\chi_f(G)$, obtained as the fractional relaxation of an integer linear
program defining the chromatic number.  As there are many equivalent ways how to define the fractional chromatic number~\cite{ScheinermanUllman2011},
let us choose one which is convenient with regards to the topic of this paper.

A \emph{weight assignment} for a graph $G$ is a function $w:V(G)\to\mathbb{R}^+_0$ which is not identically $0$.
For a function $f:X\to \mathbb{R}$ and a set $Y\subseteq X$, we define $f(Y)=\sum_{y\in Y} f(y)$.
Let $\alpha_w(G)$ denote the maximum weight of an independent set in $G$, that is,
$$\alpha_w(G)=\max\{w(Y):\text{$Y\subseteq V(G)$, $Y$ is independent in $G$}\}.$$
Then the fractional chromatic number $\chi_f(G)$ is defined as the supremum of $\tfrac{w(V(G))}{\alpha_w(G)}$
over all weight assignments $w$ for $G$.  Note that this can be expressed as a linear optimization problem,
and thus the supremum could be replaced by maximum in the definition.

The average weight of a color class in a proper coloring of $G$ by $\chi(G)$ colors is $\tfrac{w(V(G))}{\chi(G)}$,
showing that $\alpha_w(G)\ge \tfrac{w(V(G))}{\chi(G)}$ for every weight assignment $w$, and thus
$\chi_f(G)\le \chi(G)$.  The important question of whether the chromatic number
can be bounded by a function of the fractional chromatic number was answered in negative by Lov\'asz~\cite{kneser},
who proved that for all positive integers $a\ge 2b$, the Kneser graph $K_{a:b}$ has chromatic number exactly $a-2b+2$,
while it is known to have fractional chromatic number exactly $a/b$.  Consequently, the Kneser graphs $K_{(2b+c):b}$
have chromatic number $c+2$ (which can be arbitrarily large) and fractional chromatic number $2+c/b$
(which can at the same time be arbitrarily close to $2$, by choosing $b\gg c$).

From the other side, the fractional chromatic number is naturally lower bounded by the \emph{Hall ratio} $\rho(G)$
of the graph, defined as the maximum of $\tfrac{|V(H)|}{\alpha(H)}$ over all subgraphs $H$ of $G$,
where $\alpha(H)$ denotes the maximum size of an independent set of $H$.  Equivalently,
$\rho(G)$ is equal to the maximum of $\tfrac{w(V(G))}{\alpha_w(G)}$ over all $\{0,1\}$-valued weight assignments $w:V(G)\to \{0,1\}$,
which clearly implies $\chi_f(G)\ge \rho(G)$.  Let us remark that for Kneser graphs, the fractional chromatic number
and the Hall ratio coincide.  Furthermore, arguments to show that the (fractional) chromatic number of a particular
graph is large often proceed by lower bounding the Hall ratio.  This led Harris~\cite{harris} to conjecture that the fractional chromatic number
can be upper bounded by a function (actually even a linear function) of the Hall ratio; this question is also implicit in
the discussion at the end of Johnson Jr~\cite{hallneq}.  Although this is a rather unlikely proposition, it does not seem
easy to disprove.

Johnson Jr~\cite{hallneq} proved that there exist graphs $G$ for which $\chi_f(G)-\rho(G)$ is arbitrarily large.
Daneshgar et al.~\cite{dhj} and Barnett~\cite{barnett} constructed graphs with $\chi_f(G)\ge \tfrac{6}{5}\rho(G)$
and $\chi_f(G)\ge \tfrac{343}{282}\rho(G)$, respectively.  Note that if $G_k$ is obtained from $k$ vertex-disjoint
copies of $G$ by adding all edges between distinct copies, then $\chi_f(G_k)=k\chi_f(G)$ and $\rho(G_k)=k\rho(G)$,
and thus there also exist graphs with the same ratio $\chi_f(G)/\rho(G)$ and arbitrarily large Hall ratio.
Since the Hall ratio is lower bounded by the clique number of the graph, the possible counterexamples to Harris' conjecture
need to have a bounded clique number
and unbounded (fractional) chromatic number.  However, the probabilistic constructions of such graphs usually have
large Hall ratio.  Among the non-probabilistic constructions, iterated Mycielski graphs were investigated by
Cropper et al.~\cite{cropper2006hall}; they conclude that the Hall ratio of iterated Mycielski graphs is arbitrarily
large, and thus they cannot serve as a counterexample.

In a recent breakthrough, Blumenthal et al~\cite{nonlin} constructed graphs showing that not only the fractional chromatic
number is not linear in the Hall ratio, it actually cannot be bounded by a polynomial function of the Hall ratio.
Nevertheless, their graphs still have unbounded Hall ratio, leaving the possibility of the fractional chromatic number
being bounded by a fast growing function of the Hall ratio (see also~\cite[Question 16]{nonlin}).

We resolve this issue by proving two results of independent interest on properties of $1$-subdivisions appearing in graphs
of large average degree. The \emph{$1$-subdivision} of a graph $H$ is the bipartite graph obtained from $H$ by subdividing
each edge exactly once.  Equivalently, the $1$-subdivision of $H$ is isomorphic to the \emph{incidence graph} of $H$, that is,
the graph with the vertex set $V(H)\cup E(H)$ and $v\in V(H)$ being adjacent to $e\in E(H)$ if and only if the edge $e$ is incident with $v$.
A graph $G$ \emph{contains the $1$-subdivision of $H$} if the $1$-subdivision of $H$ is isomorphic to a subgraph of $G$.

Dvo{\v{r}}{\'a}k~\cite{Dvo2007,subdivchar} proved that for every $c$, every graph of sufficiently large average degree contains the $1$-subdivision
of a graph of chromatic number at least $c$.  Firstly, we prove that this statement also holds for the fractional chromatic number.
\begin{theorem}\label{thm-frac}
For every integer $c\ge 10$, every graph of average degree at least $256c^3$ contains the $1$-subdivision
of a graph of fractional chromatic number at least $c$.
\end{theorem}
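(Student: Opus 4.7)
Since $\chi_f(K_c)=c$, it suffices to show that $G$ contains the $1$-subdivision of $K_c$ as a subgraph; then setting $H=K_c$ gives the conclusion. My plan is to pass to a subgraph of large minimum degree and locate the branch and subdivision vertices via a dependent-random-choice argument. First I would extract a subgraph $G_0\subseteq G$ of minimum degree $\delta\ge 128c^3$ via the standard greedy argument (iteratively deleting vertices of degree less than half the average), which is possible since $G$ has average degree at least $256c^3$.

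The central observation is the following: if I can find a set $T\subseteq V(G_0)$ with $|T|=\binom{c}{2}$ whose common neighborhood $U:=\bigcap_{t\in T}N_{G_0}(t)$ contains at least $c$ vertices, then the $1$-subdivision of $K_c$ embeds in $G_0$. Indeed, pick any $c$ vertices $v_1,\ldots,v_c\in U$ as branch vertices and any bijection from the $\binom{c}{2}$ pairs $\{v_i,v_j\}$ to $T$; the element $t\in T$ assigned to $\{v_i,v_j\}$ is adjacent to both $v_i$ and $v_j$ (since $v_i,v_j\in U\subseteq N_{G_0}(t)$) and is automatically distinct from the other assigned subdivision vertices. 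To find such a $T$, I would sample $T$ uniformly at random as a $\binom{c}{2}$-subset of $V(G_0)$ and apply the standard dependent-random-choice estimate $\mathbb{E}|U|\ge n(\delta/n)^{\binom{c}{2}}$, choosing parameters so that this expectation exceeds $c$ and hence some realization of $T$ succeeds.

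The main obstacle is that the dependent-random-choice bound degenerates when $n\gg\delta$, while $G_0$ could in principle have arbitrarily many vertices relative to $\delta$. I expect the proof to handle this with a preliminary reduction to a subgraph of $G_0$ of size polynomial in $\delta$ (while maintaining minimum degree $\Omega(\delta)$) --- for instance via a K\H{o}v\'ari--S\'os--Tur\'an-style densification that isolates a small dense bipartite substructure, or by restricting to a suitably chosen bounded-radius ball --- after which the DRC estimate applies cleanly. The cubic factor $c^3$ in the hypothesis is consistent with the exponents accumulated from the minimum-degree extraction, the preliminary size-reduction, and the final DRC estimate requiring $\mathbb{E}|U|\ge c$ when $|T|=\binom{c}{2}$.
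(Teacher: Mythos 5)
The proposal is built on a false reduction: it is \emph{not} true that every graph of sufficiently large average degree contains the $1$-subdivision of $K_c$, so you cannot take $H=K_c$. In fact the paper's own Theorem~\ref{thm-hallr} gives explicit counterexamples: for every $d$ there is a bipartite graph of average degree at least $d$ that contains no $1$-subdivision of a graph with Hall ratio exceeding $18$, and since $\rho(K_c)=c$, for $c\ge 19$ such a graph contains no $1$-subdivision of $K_c$, no matter how large its average degree. So the central claim of your first paragraph cannot be salvaged, and the difficulty you flag in the last paragraph (the DRC estimate degenerating when $n\gg\delta$) is not a technicality to be patched but the symptom of a genuinely false statement; there is no ``preliminary size-reduction'' that rescues it, because the obstruction is already present in sparse Ramanujan-type or random bipartite host graphs.

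The paper's route is essentially the opposite of forcing a fixed dense pattern. It first extracts (Lemma~\ref{lemma-semireg}) a bipartite $H$ with parts $A,B$, $|A|=q|B|$, and every vertex of $A$ of degree exactly $a$ — a lopsided semi-regular structure, not a clique. Then it builds a \emph{random} graph $H_B$ on $B$ by, for each $v\in A$, joining a uniformly random pair of neighbors of $v$; the $1$-subdivision of $H_B$ automatically sits inside $H$. Finally it lower-bounds $\chi_f(H_B)$ not via its Hall ratio or clique number but via the degree-weighted independence number: Lemma~\ref{lemma-weight} and a union bound over the $2^{|B|}$ subsets show that with positive probability no set $Z\subseteq B$ with $\deg_H(Z)\ge(\sqrt q\,a+q)|B|$ is independent in $H_B$, which directly gives $\chi_f(H_B)>qa/(\sqrt q\,a+q)=c$ for $a=2c$, $q=4c^2$. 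The subtlety the paper is designed around — and your proposal misses — is that the graph whose $1$-subdivision one finds must be allowed to be a sparse random-like graph with high fractional chromatic number but bounded Hall ratio, precisely because $K_c$ (or anything with $\rho\ge 19$) need not appear.
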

Secondly, we prove that the statement \emph{does not} hold for the Hall ratio.
\begin{theorem}\label{thm-hallr}
For every integer $d\ge 1$, there exists a bipartite graph of average degree at least $d$
that does not contain the $1$-subdivision of any graph with Hall ratio greater than $18$.
\end{theorem}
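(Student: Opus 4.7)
The plan is to reduce the claim to a codegree-sparsity condition on $G$ and then realize such a $G$ by a construction (probabilistic with alterations, or explicit). Since the Hall ratio of $H$ is attained by some subgraph and, in turn, by one of its connected components, I may assume $H$ is connected. If the $1$-subdivision of $H$ embeds in $G$ via maps $\phi: V(H) \hookrightarrow V(G)$ and $\psi: E(H) \hookrightarrow V(G)$, bipartiteness of $G$ together with connectedness of $H$ forces $\phi(V(H))$ to lie in one part, say $A$, and $\psi(E(H)) \subseteq B$. For any $V' \subseteq V(H)$, each edge of $H[V']$ is witnessed by a distinct vertex of $B$ with at least two neighbors in $\phi(V')$; writing $B_S := \{b \in B : |N_G(b) \cap S| \ge 2\}$ for $S \subseteq A$, this gives
\[
|E(H[V'])| \le |B_{\phi(V')}|.
\]

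If $G$ satisfies $|B_S| \le \tfrac{17}{2}|S|$ for every $S \subseteq A$, and the symmetric condition on $B$, then every subgraph of $H$ has average degree at most $17$. Applying the Caro--Wei inequality (Jensen's inequality to the convex function $x \mapsto 1/(x+1)$) yields $\alpha(H') \ge |V(H')|/18$ for every subgraph $H' \subseteq H$, hence $\rho(H) \le 18$. Thus the theorem reduces to constructing, for each $d$, a bipartite graph $G$ of average degree at least $d$ satisfying this codegree-sparsity condition on both sides.

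For the construction, I would take $G$ to be a random bipartite graph with carefully chosen bipartition sizes $|A|, |B|$ and edge probability $p$, then alter $G$ slightly to enforce the codegree bound for every $S$. The parameters must be chosen so that the expected density yields average degree at least $d$, while keeping the deviation of $|B_S|$ from its mean small enough that a union bound over $S$ succeeds. An asymmetric bipartition (with $|B|$ much larger than $|A|$, or vice versa) is likely needed, possibly combined with a skewed degree distribution on $B$: many low-degree vertices contributing to the total edge count without inflating any $|B_S|$, together with a few structured high-degree vertices placed so that the codegree bound survives at every scale.

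The main obstacle is the tension between high average degree and the codegree-sparsity condition. For $S = A$ the condition forces at most $\tfrac{17}{2}|A|$ vertices of $B$ to have degree $\ge 2$, and for pairs $S$ it bounds codegrees; the delicate regime is intermediate $|S|$, where $|B_S|$ is neither close to its trivial combinatorial lower bound nor to the saturation value $|B|$. It is in this regime that the specific constant $\tfrac{17}{2}$, and hence the Hall-ratio bound $18$, must be engineered. I expect this parameter-balancing step, together with the union bound over subsets $S$, to constitute the main technical work of the proof.
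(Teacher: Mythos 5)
Your reduction step is clean and correct: if the $1$-subdivision of $H$ embeds with branch vertices in $A$, then $|E(H[V'])|\le |B_{\phi(V')}|$, so a uniform bound $|B_S|\le \tfrac{17}{2}|S|$ for all $S\subseteq A$ would force every induced subgraph of $H$ to have average degree at most $17$, and Caro--Wei/Tur\'an (the paper uses exactly Tur\'an, as Lemma~\ref{lemma-avgdeg}) then gives $\rho(H)\le 18$. The issue is that the sufficient condition you reduce to cannot hold in any bipartite graph of large average degree, so the construction you hope to carry out in the second half does not exist.

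Concretely, suppose $G$ is bipartite on $(A,B)$ with average degree $D\ge 8$ and satisfies $|B_S|\le c|S|$ for all $S\subseteq A$ and $|A_T|\le c|T|$ for all $T\subseteq B$. Pass to the $(D/2)$-core $G'$ with parts $A'\subseteq A$, $B'\subseteq B$; it is nonempty and every vertex has degree at least $D/2\ge 2$. Taking $T=B'$ in the $B$-side condition shows $|A'|\le c|B'|$ (every $a\in A'$ has $\ge 2$ neighbours in $B'$), and symmetrically $|B'|\le c|A'|$. Now let $S$ be a uniformly random subset of $A'$ of size $s=\lceil 8|A'|/D\rceil$. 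For each $b\in B'$, the number of its $G'$-neighbours in $S$ is hypergeometric with mean at least $(D/2)\cdot(8/D)=4$, so $\mathrm{Prob}[b\in B'_S]$ is bounded below by a universal positive constant $p_0$; hence $\mathrm{E}[|B'_S|]\ge p_0|B'|$. On the other hand $|B'_S|\le |B_S|\le cs\le 16c|A'|/D$ for every such $S$. Combining, $p_0|B'|\le 16c|A'|/D\le 16c^2|B'|/D$, i.e., $D\le 16c^2/p_0$. So under both sparsity conditions the average degree of $G$ is $O(c^2)$, a constant when $c=\tfrac{17}{2}$. This is precisely why the paper does \emph{not} aim for such a uniform codegree bound.

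What the paper does instead is build (following Pyber, R\"odl and Szemer\'edi) a bipartite graph with many different ``scales'' $B_1,\dots,B_M$, with $|B_i|=n^{1-\varepsilon_M 4^i}$, in which each vertex of $A$ has exactly one random neighbour in each $B_i$. The analogue of your condition is only used at the scale matching $|V(H)|$: one picks the index $m$ with $|B_{m+1}|\le |V(H)|<|B_m|$ and shows (Lemmas~\ref{lemma-notinb} and~\ref{lemma-notina} via the events $\eA_m$, $\eB_m$) that a dense $H$ of that order cannot be embedded with its branch vertices in $B_1\cup\dots\cup B_{m-1}$ or in $A$. The two directions are genuinely asymmetric: when branch vertices lie in $B$, the key fact is that $V(H)\cap B_i$ is independent in $H$ for each $i$; when they lie in $A$, the key fact is that the edges of $H$ realized in a fixed $B_i$ form a matching, and the higher-indexed $B_i$'s are too small to carry many edges. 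Neither of these is a uniform pairwise-codegree statement. So while your reduction identifies the right qualitative target (bound the average degree of every subgraph of $H$), the quantitative route must be scale-dependent; the uniform condition you set up is provably unattainable.
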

Applying Theorem~\ref{thm-frac} to graphs obtained using Theorem~\ref{thm-hallr} for $d=256c^3$, we conclude that
the fractional chromatic number cannot be bounded by any function of the Hall ratio.
\begin{corollary}\label{cor-negat}
For every positive integer $c$, there exists a graph of fractional chromatic number at least $c$ and Hall ratio at most $18$.
\end{corollary}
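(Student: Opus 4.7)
The plan is to combine Theorems~\ref{thm-frac} and~\ref{thm-hallr} in a direct black-box reduction. Given a positive integer $c \geq 10$, I would set $d = 256c^3$ and invoke Theorem~\ref{thm-hallr} to obtain a bipartite graph $G$ of average degree at least $d$ such that every graph whose $1$-subdivision appears as a subgraph of $G$ has Hall ratio at most $18$. Since $G$ then has average degree at least $256c^3$, Theorem~\ref{thm-frac} applies to $G$ and produces a graph $H$ with $\chi_f(H) \geq c$ whose $1$-subdivision is a subgraph of $G$. By the defining property of $G$, we automatically get $\rho(H) \leq 18$, so $H$ is the witness required by the corollary.

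For the remaining small values $1 \leq c \leq 9$, I would simply reuse the graph produced above for $c = 10$: it already satisfies $\chi_f \geq 10 \geq c$ and $\rho \leq 18$, so the same $H$ works.

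There is essentially no obstacle at the corollary stage; all the technical work is absorbed into Theorems~\ref{thm-frac} and~\ref{thm-hallr}. The argument is a clean pigeonhole: $G$ is large enough to be \emph{forced} to contain the $1$-subdivision of some graph of high fractional chromatic number, while simultaneously being \emph{engineered} so that every such subdivided graph has small Hall ratio. The only bookkeeping subtlety is the hypothesis $c \geq 10$ in Theorem~\ref{thm-frac}, which is handled by the trivial reduction to $c = 10$ described above.
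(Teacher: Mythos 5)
Your argument is exactly the paper's: set $d = 256c^3$, take the bipartite graph $G$ from Theorem~\ref{thm-hallr}, apply Theorem~\ref{thm-frac} to extract a $1$-subdivided $H$ with $\chi_f(H)\geq c$, and read off $\rho(H)\leq 18$ from the defining property of $G$; the reduction of small $c$ to $c=10$ is a minor bookkeeping point the paper leaves implicit. Correct, and the same approach.
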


In view of these results, what can we say about the relationship between the fractional chromatic number and the Hall ratio?
Firstly, let us note that for a graph $G$ with $n$ vertices, $\chi_f(G)\le \chi(G)=O(\rho(G)\log n)$.  Indeed, we can
obtain a proper coloring of $G$ by repeatedly extracting the largest independent set, reducing the size of the graph
by a factor smaller than or equal to $(1-1/\rho(G))$, so the graph becomes empty after $O(\rho(G)\log n)$ iterations.
\begin{problem}\label{prob-fn}
Determine the smallest function $g:\mathbb{Z}^+\to\mathbb{R}^+$ such that for every graph $G$,
$\chi_f(G)\le \rho(G)g(|V(G)|)$.
\end{problem}
The inspection of the proofs of Theorems~\ref{thm-frac} and \ref{thm-hallr}
shows that the number of vertices of graphs we obtain in Corollary~\ref{cor-negat}
is double exponential in their fractional chromatic number, and thus $g(n)=\Omega(\log\log n)$.

As we discussed before, the Hall ratio can be viewed as defined analogously to the fractional chromatic number, but
with the weight functions restricted to $\{0,1\}$-valued ones.  Does there exist a natural nontrivial family $\FF$ of weight functions
such that, defining $\rho_{\FF}(G)=\max_{w\in \FF} \tfrac{w(V(G))}{\alpha_w(G)}$, the fractional chromatic number of any graph $G$
can be bounded by a function of $\rho_{\FF}(G)$?  To lower-bound the fractional chromatic number of the graphs we (probabilistically)
construct in the proof of Theorem~\ref{thm-frac}, we give vertices weights which are a.a.s. up to a constant factor proportional to their
degrees.  Hence, the first natural problem to consider in this direction is as follows.
\begin{problem}
Do there for some constant $c>0$ exist graphs $G$ of arbitrarily large fractional chromatic number such that $\rho(G)\le c$ and
$\alpha_{\deg_H}(H)\ge |E(H)|/c$ for every $H\subseteq G$?
\end{problem}

Before we prove our results, we discuss their implications in the theory of bounded expansion.  Next, we prove Theorem~\ref{thm-frac} in Section~\ref{sec-fr}
and Theorem~\ref{thm-hallr} in Section~\ref{sec-hr}.

\section{Classes with Bounded Expansion}
Classes of bounded expansion have been introduced in~\cite{POMNI} as a
generalization of classes with excluded minors, which is based on the notion of
shallow minors introduced by Plotkin, Rao and Smith~\cite{shallow} (who in turn
attribute the idea to Leiserson and Toledo). A {\em shallow
minor} at depth $r$ of a graph $G$ is any graph that can be obtained from $G$ by 
deleting vertices and edges and contracting pairwise vertex-disjoint subgraphs,
each of radius at most $r$.
A class $\mathcal C$ has bounded expansion if there exists a function
$f_1:\mathbb N\rightarrow\mathbb R$, such that every shallow minor at depth $r$ of a graph in $\mathcal C$ has average degree at most
$f_1(r)$.
Quite a few characterizations of bounded expansion classes have been given,
which involve many of the classical graph invariants~\cite{Sparsity}. For
instance, denoting by $\mathcal C\shm r$ the class of all shallow minors at depth
$r$ of graphs in the class $\mathcal C$, 
$\mathrm{mad}(G)$ the maximum average degree of a subgraph of $G$, and
$\mathrm{col}(G)$ the smallest integer $k$ such that every subgraph of $G$ has minimum degree less than $k$,
we have the following characterization
of classes with bounded expansion.

\begin{theorem}[\cite{Sparsity}]\label{thm-bechar}
For a class of graphs $\mathcal C$ the following properties are equivalent:
\begin{enumerate}[(i)]
	\item\label{it:0} The class $\mathcal C$ has bounded expansion;
        \item\label{it:1} there exists a function $f_2:\mathbb N\rightarrow\mathbb N$ such that for every $r\in\mathbb N$ and every $G\in\mathcal C\shm r$ we have $\mathrm{mad}(G)\leq f_2(r)$;
	\item\label{it:2} there exists a function $f_3:\mathbb N\rightarrow\mathbb N$ such that for every $r\in\mathbb N$ and every $G\in\mathcal C\shm r$ we have $\mathrm{col}(G)\leq f_3(r)$;
	\item\label{it:3} there exists a function $f_4:\mathbb N\rightarrow\mathbb N$ such that for every $r\in\mathbb N$ and every $G\in\mathcal C\shm r$ we have $\chi(G)\leq f_4(r)$;
\end{enumerate}
\end{theorem}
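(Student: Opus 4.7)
The plan is to prove the cycle $(\ref{it:0}) \Rightarrow (\ref{it:1}) \Rightarrow (\ref{it:2}) \Rightarrow (\ref{it:3}) \Rightarrow (\ref{it:0})$. The first three implications are essentially standard graph-theoretic inequalities between the invariants under consideration, while the closing implication is the nontrivial direction and requires an extraction result.

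For $(\ref{it:0}) \Rightarrow (\ref{it:1})$, I would use the fact that a subgraph is a shallow minor at depth $0$, so composing with a depth-$r$ contraction yields another depth-$r$ shallow minor; thus $\mathcal{C}\shm r$ is closed under taking subgraphs. Consequently every subgraph $H$ of any $G\in\mathcal{C}\shm r$ still lies in $\mathcal{C}\shm r$ and has average degree at most $f_1(r)$, giving $\mathrm{mad}(G)\le f_1(r)$ and allowing $f_2=f_1$. For $(\ref{it:1}) \Rightarrow (\ref{it:2})$, the standard bound $\mathrm{col}(G)\le \mathrm{mad}(G)+1$ is obtained by iteratively removing a vertex of minimum degree from each subgraph (which must have degree at most the maximum average degree), producing a degeneracy ordering with back-degrees bounded by $\mathrm{mad}(G)$; then $f_3 := f_2 + 1$ suffices. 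And $(\ref{it:2}) \Rightarrow (\ref{it:3})$ follows from the greedy coloring along a degeneracy ordering, which uses at most $\mathrm{col}(G)$ colors, so we can take $f_4 := f_3$.

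The heart of the proof, and the main obstacle, is the closing implication $(\ref{it:3}) \Rightarrow (\ref{it:0})$. I would argue by contrapositive: assuming $\mathcal{C}$ is not of bounded expansion, there is a fixed $r$ and a sequence of graphs $G_n \in \mathcal{C}\shm r$ whose average degrees tend to infinity. The key input I would invoke is a quantitative, bounded-depth strengthening of the Mader/Kostochka theorem, asserting the existence of a function $t(d) \to \infty$ and a universal constant $c$ such that every graph of average degree at least $d$ contains $K_{t(d)}$ as a shallow minor at depth at most $c \log t(d)$. Combined with the composition principle (a shallow minor at depth $s$ of a graph in $\mathcal{C}\shm r$ is itself a shallow minor in $\mathcal{C}\shm{r'}$ for some $r'$ polynomial in $r$ and $s$), this produces arbitrarily large complete graphs $K_{t_n}$ as shallow minors in $\mathcal{C}\shm{r'}$ for a fixed depth $r'$ depending only on $r$. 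Since $\chi(K_{t_n}) = t_n$ is unbounded, this contradicts $(\ref{it:3})$ and closes the cycle. The depth control on the clique-extraction step is essential: without it, the argument would only recover a minor-closed-type statement and would not suffice to deduce bounded expansion at every depth.
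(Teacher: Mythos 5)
This theorem is cited from \cite{Sparsity} rather than proved in the paper, so there is no internal proof to compare against; I will assess your argument on its own merits.

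The implications $(\ref{it:0})\Rightarrow(\ref{it:1})\Rightarrow(\ref{it:2})\Rightarrow(\ref{it:3})$ are correct: $\mathcal{C}\shm r$ is closed under subgraphs (deletion is a depth-$0$ operation), so bounded average degree on $\mathcal{C}\shm r$ already bounds $\mathrm{mad}$, and the chain $\mathrm{mad}(G)+1\ge\mathrm{col}(G)\ge\chi(G)$ is standard.

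The closing implication $(\ref{it:3})\Rightarrow(\ref{it:0})$ has a genuine gap, and it is located exactly where you (correctly) identified the difficulty. You invoke a clique-extraction lemma producing $K_{t(d)}$ as a shallow minor at depth $O(\log t(d))$, and then assert that after composing with the depth-$r$ operation you obtain arbitrarily large cliques \emph{at a fixed depth} $r'$ depending only on $r$. But the intermediate depth $s_n=O(\log t_n)$ tends to infinity with $n$, so the composed depth $r'_n$ (which is polynomial in $r$ and $s_n$) also tends to infinity. You get $K_{t_n}\in\mathcal{C}\shm{r'_n}$ with $r'_n\to\infty$, which does not contradict $(\ref{it:3})$: that hypothesis bounds $\chi$ at each fixed depth separately, and a bound that deteriorates as the depth increases is exactly what a nowhere-dense, non--bounded-expansion class provides. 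Indeed, no clique-extraction result with depth independent of $d$ can exist: in a class that is nowhere dense but not of bounded expansion (such as graphs with maximum degree at most their girth, mentioned in the paper), some fixed depth yields unbounded average degree, yet every fixed depth yields bounded clique number; a depth-independent extraction would contradict nowhere-density.

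The correct tool is the one the paper highlights precisely because it yields a high-chromatic pattern at \emph{depth one}, not at depth $\Theta(\log t)$: Dvo{\v r}{\'a}k's theorem \cite{Dvo2007,subdivchar} that for every $c$ there is $D(c)$ such that every graph of average degree at least $D(c)$ contains the $1$-subdivision of a graph of chromatic number at least $c$. Given $(\ref{it:3})$ with function $f_4$, fix $r$, set $D=D(f_4(2r+1)+1)$, and suppose some $G\in\mathcal{C}\shm r$ had average degree at least $D$. Then $G$ contains the $1$-subdivision of a graph $H$ with $\chi(H)\ge f_4(2r+1)+1$; since $H$ is a depth-$1$ minor of $G$, we have $H\in\mathcal{C}\shm(2r+1)$, contradicting $(\ref{it:3})$. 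So every $G\in\mathcal{C}\shm r$ has average degree less than $D$, which is $(\ref{it:0})$. This is precisely the argument the paper runs in Corollary~\ref{cor-fbe} for the fractional chromatic number using Theorem~\ref{thm-frac} in place of Dvo{\v r}{\'a}k's result. Your high-level structure (a cycle of implications with one hard step requiring an extraction-plus-composition argument) is right; what is missing is that the extracted dense object must appear at a depth that does not grow with the target parameter, and cliques cannot serve that role, while $1$-subdivisions of high-chromatic graphs can.
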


By Theorem~\ref{thm-frac}, we get a characterization of bounded expansion in terms of the fractional chromatic number.
\begin{corollary}\label{cor-fbe}
A class $\mathcal C$ of graphs has bounded expansion if and only if
\begin{enumerate}[(i)]
\setcounter{enumi}{4}
\item\label{it:4} there exists a function $f_5:\mathbb N\rightarrow\mathbb N$ such that for every $r\in\mathbb N$ and every $G\in\mathcal C\shm r$ we have $\chi_f(G)\leq f_5(r)$.
\end{enumerate}
\end{corollary}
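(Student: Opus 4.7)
The plan is to establish the equivalence of condition~(\ref{it:4}) with those of Theorem~\ref{thm-bechar}. The forward direction is immediate: by Theorem~\ref{thm-bechar}(\ref{it:3}) there exists a function $f_4$ with $\chi(G)\le f_4(r)$ for every $G\in\mathcal{C}\shm r$, and since $\chi_f(G)\le\chi(G)$ always, we may take $f_5=f_4$.

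For the reverse direction we argue the contrapositive. Assume that $\mathcal{C}$ does not have bounded expansion. By Theorem~\ref{thm-bechar}(\ref{it:1}), there exists a fixed integer $r_0$ such that $\mathrm{mad}$ is unbounded on $\mathcal{C}\shm r_0$. Given any integer $c\ge 10$, pick some $G\in\mathcal{C}\shm r_0$ of average degree at least $256c^3$; by Theorem~\ref{thm-frac}, $G$ contains the $1$-subdivision of some graph $H$ with $\chi_f(H)\ge c$.

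The key step is to observe that $H$ itself is a bounded-depth shallow minor of some member of $\mathcal{C}$. Indeed, orient the edges of $H$ arbitrarily and, for each oriented edge $(u,v)$, place the subdivision vertex $w_{uv}$ in the cluster $S_v$ of its head; each $S_v$ is a star centered at $v$, hence a connected subgraph of radius $1$ in (the subgraph of $G$ isomorphic to) the $1$-subdivision of $H$, and contracting all $S_v$'s recovers $H$. Thus $H$ is a depth-$1$ shallow minor of $G$. Writing $G$ itself as a depth-$r_0$ shallow minor of some $F\in\mathcal{C}$, the preimage in $F$ of each star $S_v$ consists of a ball of radius at most $r_0$ in $F$ together with finitely many adjacent balls of radius at most $r_0$, and so has radius at most $3r_0+1$ in $F$. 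Therefore $H\in\mathcal{C}\shm(3r_0+1)$. Letting $c\to\infty$ shows that $\chi_f$ is unbounded on $\mathcal{C}\shm(3r_0+1)$, contradicting condition~(\ref{it:4}).

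No serious obstacle is foreseen: the only nontrivial input is Theorem~\ref{thm-frac}, and the rest reduces to routine bookkeeping for the depth of a composition of shallow minors.
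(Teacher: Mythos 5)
Your proof is correct and follows essentially the same route as the paper: the forward direction via $\chi_f\le\chi$, and the reverse direction via Theorem~\ref{thm-frac} together with the observation that contracting a $1$-subdivision of $H$ inside $G\in\mathcal{C}\shm r$ exhibits $H$ as a shallow minor of bounded depth of a member of $\mathcal{C}$. The only differences are cosmetic: you argue by contrapositive (extracting a fixed $r_0$ on which $\mathrm{mad}$ is unbounded), whereas the paper argues directly by exhibiting a bound $f_1(r)$ on the average degree of graphs in $\mathcal{C}\shm r$; and your depth bound $3r_0+1$ is slightly weaker than the paper's $2r+1$ (obtainable by splitting the path through the ball of a subdivision vertex between the two incident branch clusters rather than assigning it wholesale to one of them), but either bound suffices.
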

\begin{proof}
If $\mathcal{C}$ has bounded expansion, then $\chi_f(G)\le\chi(G)\le f_4(r)$ for every $r\in\mathbb N$ and every $G\in\mathcal C\shm r$ by part (iv) of Theorem~\ref{thm-bechar}.

Conversely, suppose that $\chi_f(G)\le f_5(r)$ for every $r\in\mathbb N$ and every $G\in\mathcal C\shm r$.
Let $f_1(r)\colonequals 256\bigl(\max(10,f_5(2r+1)+1)\bigl)^3$.
Consider any $r\in\mathbb N$ and any graph $G\in\mathcal C\shm r$.
If the $1$-subdivision of a graph $H$ appears in $G$, then $H\in \mathcal C\shm (2r+1)$, and thus $\chi_f(H)\le f_5(2r+1)$.
By Theorem~\ref{thm-frac}, every graph of average degree at least $f_1(r)$ contains the $1$-subdivision
of a graph of fractional chromatic number at least $f_5(2r+1)+1$; hence, we conclude $G$ has average degree less than $f_1(r)$.
Consequently, $\mathcal{C}$ has bounded expansion.
\end{proof}

The situation is less clear for the Hall ratio, that is for classes $\mathcal C$ such that the following property holds:
\begin{enumerate}[(i)]
\setcounter{enumi}{5}
	\item\label{it:5} there exists a function $f_6:\mathbb N\rightarrow\mathbb N$ such that for every $r\in\mathbb N$ and every $G\in\mathcal C\shm r$ we have $\rho(G)\leq f_6(r)$. 
\end{enumerate}
While every class with bounded expansion clearly satisfies (vi), due to Theorem~\ref{thm-hallr} the converse argument used in the proof of Corollary~\ref{cor-fbe} fails in the Hall ratio setting.
Nevertheless, we also do not know any example of a class with unbounded expansion whose shallow minors have bounded Hall ratio, leaving the following question open.

\begin{problem}\label{prob-vi}
Is it true that a class $\mathcal C$ has bounded expansion if and only if (vi) holds?
\end{problem}

More generally, a class $\mathcal C$ is \emph{nowhere dense}~\cite{npom-nd2} if there exists a function $g:\mathbb N\rightarrow\mathbb N$ such that for every $r\in\mathbb N$
and every $G\in\mathcal C\shm r$ we have $\omega(G)\leq g(r)$, where $\omega(G)$ denotes the maximum size of a clique in $G$. Note that all bounded expansion classes are nowhere dense but the converse does not hold,
as witnessed by the class of graphs having their maximum degree bounded by their girth~\cite{Sparsity}.
The present knowledge of how the usual density-related graph invariants characterize bounded expansion classes or nowhere dense classes is as follows:

\[
\underbrace{\mathrm{mad}(G)+1\geq \mathrm{col}(G)\geq \chi(G)\geq \chi_f(G)}_{\text{bounded expansion}}\geq \underbrace{\rho(G)}_{?}\geq \underbrace{\omega(G)}_{\text{nowhere dense}}
\]

In particular, every class with property \eqref{it:5} is nowhere dense.  However, the converse does not hold, as a consequence of the next proposition.
\begin{proposition}
	There exists a nowhere dense class $\mathcal C$ with unbounded Hall ratio.
\end{proposition}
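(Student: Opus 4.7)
The plan is to construct $\mathcal{C}$ as a countable family $\{G_n\}_{n\ge 1}$ in which each member simultaneously has very large girth and very small independence ratio. The primary input will be the classical probabilistic theorem of Erd\H{o}s asserting that for every pair of positive integers $g$ and $k$ there is a graph $G$ with $\mathrm{girth}(G)>g$ and $|V(G)|/\alpha(G)>k$. (The familiar version with ``chromatic number $>k$'' in place of the independence-ratio bound is proved by controlling $\alpha$ on a suitable random graph and then deleting vertices on short cycles, and it is in fact the independence-ratio bound that the argument delivers.) Applying this with $g=k=6n+3$ produces, for each $n\ge 1$, a graph $G_n$ with $\mathrm{girth}(G_n)>6n+3$ and $|V(G_n)|/\alpha(G_n)>n$.

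Having collected these graphs, I would set $\mathcal{C}\colonequals\{G_n:n\ge 1\}$. Since $\rho(G_n)\ge |V(G_n)|/\alpha(G_n)>n$, the Hall ratio is already unbounded over $\mathcal{C}$.

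It then remains to verify that $\mathcal{C}$ is nowhere dense. The key structural observation is that large girth forbids any dense shallow minor: if $G$ admits $K_3$ as a shallow minor at depth $r$, then $G$ contains a cycle of length at most $6r+3$. Indeed, three pairwise disjoint branch sets $B_1, B_2, B_3$, each of radius at most $r$ in its induced subgraph and pairwise joined by a connecting edge, yield a closed walk of length at most $3(2r+1)=6r+3$, which must contain a cycle. Hence $\mathrm{girth}(G)>6r+3$ forces $\omega(G\shm r)\le 2$. For every fixed $r$, this bound applies to $G_n$ as soon as $n\ge r$; the finitely many remaining $G_1,\dots,G_{r-1}$ are individual finite graphs and contribute only a finite additional term to $\sup_{G\in\mathcal{C}}\omega(G\shm r)$. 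Thus this supremum is finite for every $r$, and $\mathcal{C}$ is nowhere dense.

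The main obstacle I foresee is precisely producing, for each $n$, a \emph{single} graph that is simultaneously of large girth and of large $|V|/\alpha$; this is exactly what the Erd\H{o}s probabilistic construction provides, so once that input is in place the nowhere-dense conclusion reduces to the elementary girth-versus-shallow-minor estimate sketched above.
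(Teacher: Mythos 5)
Your proof is correct. The overall strategy is the same as the paper's: construct a countable class of graphs with girth tending to infinity and independence ratio tending to zero, then argue nowhere density from the elementary fact that a depth-$r$ shallow minor of a graph of girth greater than $6r+3$ must be triangle-free. The difference is in the source of the graphs and in how the finitely many ``small girth'' members of the class are handled. The paper uses the explicit Lubotzky--Phillips--Sarnak Ramanujan graphs $X_{p,q}$, which are $(p+1)$-regular; this gives a nondecreasing function $F$ bounding $\Delta(G_n)$ in terms of $\mathrm{girth}(G_n)$, and the paper then bounds $\omega$ of a depth-$r$ shallow minor by $F^{r+1}(6r+3)+1$ whenever the girth is at most $6r+3$, yielding an explicit bounding function. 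You instead invoke the Erd\H{o}s probabilistic construction, whose graphs do not have bounded degree, and dispose of the finitely many exceptional $G_1,\dots,G_{r-1}$ by observing that each is a single finite graph and hence contributes only a finite (if unquantified) term to $\sup\{\omega(G): G\in\mathcal{C}\shm r\}$. Both arguments are sound; yours is somewhat more elementary and self-contained, while the paper's is more explicit about the bounding function and exhibits graphs with bounded maximum degree, which may be of independent interest.
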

\begin{proof}
Indeed, for prime $p,q$ with Legendre symbol $\legendre{p}{q}=1$ with $q$ sufficiently larger
than $p$ there exists a $(p+1)$-regular graph $X_{p,q}$ with girth at least $2\log_p q$ and independence number at most $2\sqrt{p}/(p+1)|X_{p,q}|$ (see for instance~\cite{nevsetvril2013combinatorial}).
It follows that there exist a non-decreasing function $F:\mathbb N\rightarrow \mathbb N$ and a sequence $(G_n)_{n\in\mathbb N}$
of graphs with ${\rm girth}(G_n)\rightarrow\infty$, $\Delta(G_n)< F({\rm girth}(G_n))$ and $|G_n|/\alpha(G_n)\rightarrow\infty$.
Let $\mathcal C=\{G_n\mid n\in\mathbb N\}$. From the first two properties of $G_n$ we get that $\mathcal C$ is nowhere dense
(indeed, if $6r+3<{\rm girth}(G_n)$, then a shallow minor $G$ of $G_n$ at depth $r$ is triangle-free and thus satisfies $\omega(G)\le 2$;
if $6r+3\ge {\rm girth}(G_n)$, then $G$ has maximum degree at most $\Delta^{r+1}(G_n)<F^{r+1}({\rm girth}(G_n))\le F^{r+1}(6r+3)$, and thus $\omega(G)\le F^{r+1}(6r+3)+1$).
However, the last property implies that $\rho(G_n)\geq |G_n|/\alpha(G_n)$ is unbounded on $\mathcal C$.
\end{proof}

Thus, the property (vi) either characterizes bounded expansion classes (if the answer to Problem~\ref{prob-vi} is positive), or
it is strictly sandwiched between the properties of bounded expansion and nowhere-density.

\section{Fractional chromatic number}\label{sec-fr}

We now turn our attention to $1$-subdivisions appearing in graphs with large average degree.
We use a standard probabilistic argument to prove the following lemma.  To this end, we employ
Chernoff's inequality:  Suppose $X$ is the sum of $n$ independent Bernoulli variables with mean $p$,
and let $\mu=\text{E}[X]=np$.
Then
\begin{equation}\label{eq-cher-large}
\text{Prob}[X\ge (1+\delta)\mu] \le \exp\Bigl(-\frac{\delta^2\mu}{2+\delta}\Bigr)
\end{equation}
and
\begin{equation}\label{eq-cher-small}
\text{Prob}[X\le (1-\delta)\mu] \le \exp\Bigl(-\frac{\delta^2\mu}{2}\Bigr)
\end{equation}
for every $\delta\ge 0$.

\begin{lemma}\label{lemma-semireg}
For all integers $q\ge 1$ and $a\ge 20$, every graph $G$ of average degree at least $32aq$ contains a bipartite subgraph $H$
with the bipartition $(A,B)$ satisfying $|A|=q|B|$ and every vertex of $A$ having degree exactly $a$.
\end{lemma}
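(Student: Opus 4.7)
The plan is a standard probabilistic deletion argument.  I would first pass to a subgraph $G'\subseteq G$ of minimum degree at least $16aq$ (using the folklore fact that every graph of average degree $d$ contains a subgraph of minimum degree at least $d/2$), then sample a set $B\subseteq V(G')$ by including each vertex independently with probability $p\colonequals 1/(8q)$, and finally look at the ``good'' vertices
\[
A'\colonequals \{v\in V(G')\setminus B\colon |N_{G'}(v)\cap B|\geq a\}.
\]
The goal is to show that for some outcome $|A'|\geq q|B|$, from which the conclusion follows by taking $A\subseteq A'$ of size exactly $q|B|$ and, for each $v\in A$, keeping exactly $a$ of its edges to $B$.

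For any vertex $v$, the variable $|N_{G'}(v)\cap B|$ is binomial with mean $\mu=p\deg_{G'}(v)\geq 2a$, so applying the Chernoff bound \eqref{eq-cher-small} with $\delta=1/2$ gives
\[
\Pr\bigl[|N_{G'}(v)\cap B|<a\bigr]\leq \exp(-\mu/8)\leq \exp(-a/4)\leq\exp(-5)<\tfrac{1}{100},
\]
where the last inequality uses the hypothesis $a\geq 20$.  Since the events ``$v\notin B$'' and ``$|N_{G'}(v)\cap B|\geq a$'' depend on disjoint sets of coordinates, they are independent, so linearity of expectation yields
\[
\mathbb{E}\bigl[|A'|-q|B|\bigr]\geq \bigl((1-p)(1-\tfrac{1}{100})-qp\bigr)|V(G')|,
\]
which is strictly positive because $qp=1/8$ and $1-p\geq 7/8$.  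Hence some choice of $B$ produces $|A'|\geq q|B|$, completing the construction.

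I do not expect a serious obstacle; the only care-point is the tuning of the constants.  The factor $32$ in the hypothesis $32aq$ is exactly what supplies the slack $\mu\geq 2a$ needed to feed the Chernoff bound, and the hypothesis $a\geq 20$ is exactly what pushes the failure probability $\exp(-a/4)$ below the threshold demanded by the inequality $(1-p)(1-\exp(-a/4))>qp=1/8$.  Both thresholds enter at one point each, so loosening either would force a rearrangement of the numerics but would not alter the underlying sampling scheme.
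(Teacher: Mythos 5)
Your proof is correct and rests on the same core idea as the paper's --- pass to large minimum degree, randomly sample a set $B$, and invoke Chernoff to show that many vertices have at least $a$ neighbors in $B$ --- but it streamlines the execution in two ways. The paper first extracts a spanning bipartite subgraph by a random bisection, cleans up to minimum degree $\ge 8aq$, and then samples $B$ within one side $B_2$ with a ratio-dependent probability $|A_2|/(4q|B_2|)$; you skip the bipartite extraction entirely, sampling $B$ from the whole vertex set with the fixed probability $1/(8q)$ and letting the bipartition of $H$ emerge from the sample. The paper then certifies success with two separate tail arguments (Markov's inequality for the count of well-covered vertices plus a Chernoff upper tail on $|B|$); you replace both with the single first-moment comparison $\mathbb{E}[|A'|-q|B|]>0$, which suffices because $|A'|-q|B|$ is integer-valued. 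One small point worth making explicit is that a strictly positive outcome forces $|A'|\ge 1$, hence $|B|\ge a$, so the resulting $H$ is nondegenerate and the pruning to degree exactly $a$ is possible. Also, your closing remark that $a\ge 20$ is ``exactly'' what your numerics require overstates matters: the inequality $(1-p)(1-e^{-a/4})>1/8$ already holds for every $a\ge 1$, so your version has substantial slack in that hypothesis; the bound $a\ge 20$ is tight only in the paper's two-tail version (where it supplies $\mu\ge 40$) and in the eventual application with $a=2c\ge 20$.
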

\begin{proof}
For a uniformly random subset $A'\subseteq V(G)$, the expected number of edges of $G$ with one end in $A'$ and the other end in
$V(G)\setminus A'$ is $|E(G)|/2$; hence, $G$ has a spanning bipartite subgraph $G_1$ of average degree at least $16aq$.
Note that if $v\in V(G_1)$ has degree less than $8aq$, then $G_1-v$ has average degree more than $16aq$.
Hence, by repeatedly deleting vertices of degree less than $8aq$, we obtain a non-empty subgraph $G_2$ of minimum
degree at least $8aq$.

Let $(A_2,B_2)$ be the bipartition of $G_2$; without loss of generality, we can assume $|A_2|\ge |B_2|$.
It suffices to show that there exists a set $B\subseteq B_2$ such that at least $q|B|$ vertices of $A_2$
have each at least $a$ neighbors in $B$; then, we can set $A$ to be a set of $q|B|$ such vertices of $A_2$,
and we obtain $H$ from $G_2[A\cup B]$ by deleting all but $a$ edges incident with each vertex of $A$.
If $|A_2|\ge q|B_2|$, we can set $B=B_2$.  Hence, suppose $|A_2|<q|B_2|$.

Let $B\subseteq B_2$ be chosen by taking each element of $B_2$ independently at random with probability $p=\tfrac{|A_2|}{4q|B_2|}$.
Consider any vertex $v\in A_2$, and let $B_v$ be the number of neighbors of $v$ in $B$.  Let $\mu_v=\text{E}[B_v]$; we have
$$\mu_v=p\deg v\ge \frac{|A_2|}{4q|B_2|}\cdot 8aq=\frac{2a|A_2|}{|B_2|}\ge 2a\ge 40.$$
Consequently, by (\ref{eq-cher-small}),
$$\text{Prob}[B_v<a]\le \text{Prob}[B_v\le \mu_v/2]\le \exp(-\mu_v/8)\le e^{-5}.$$
Hence, Markov's inequality implies that with probability at least $1-2e^{-5}$, at least $|A_2|/2$ vertices of $A_2$
have at least $a$ neighbors in $B$.

The expected size of $B$ is $\mu=p|B_2|=\tfrac{|A_2|}{4q}$.  Since $G_2$ has minimum degree at least $8aq$,
we have $|A_2|\ge 8aq$, and thus $\mu\ge 2a\ge 40$.  By (\ref{eq-cher-large}),
$$\text{Prob}\bigl[|B|\ge \tfrac{|A_2|}{2q}\bigr]=\text{Prob}[|B|\ge 2\mu] \le \exp(-\mu/3)<e^{-13}.$$
Therefore, with probability at least $1-2e^{-5}-e^{-13}>0$,
$|B|<\tfrac{|A_2|}{2q}$ and at least $|A_2|/2>q|B|$ vertices of $A_2$ have at least $a$ neighbors in $B$, as required.
\end{proof}

For a graph $F$, let $\deg_F:V(G)\to\mathbb{Z}^+_0$ denote the function assigning to each vertex its degree in $F$.
Recall that for $Z\subseteq V(F)$, $\deg_F(Z)$ then denotes the sum of the degrees of vertices in $Z$.
Given a graph $H$ with the bipartition $(A,B)$, let $H_B$ denote the random graph with vertex set $B$ obtained
by, independently for each $v\in A$, choosing uniformly at random a pair of neighbors of $v$ and joining them by an edge.

\begin{lemma}\label{lemma-weight}
Let $a\ge 2$ and $q\ge 1$ be integers.
Let $H$ be a bipartite graph with the bipartition $(A,B)$ such that vertices of $A$ have degree exactly $a$
and $|A|=q|B|$.  Let $n=|B|$.
Let $Z\subseteq B$ be a set with $\deg_H(Z)\ge (\sqrt{q}a+q)n$.
Then the probability that $Z$ is an independent set in $H_B$ is less than $2^{-n}$.
\end{lemma}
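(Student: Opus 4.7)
The plan is to analyze the probability that $Z$ is independent in $H_B$ by looking at each $v\in A$ separately and applying Cauchy--Schwarz to the ``contribution profile'' of the vertices of $A$.

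First, for each $v\in A$, let $d_v=|N_H(v)\cap Z|$. Since $v$ picks its pair uniformly among the $\binom{a}{2}$ two-element subsets of $N_H(v)$, the probability that the edge it contributes lies in $Z$ equals $\binom{d_v}{2}/\binom{a}{2}=d_v(d_v-1)/(a(a-1))$. Since the choices made by distinct vertices of $A$ are independent,
\[
\Pr[Z\text{ independent in }H_B]=\prod_{v\in A}\Bigl(1-\frac{d_v(d_v-1)}{a(a-1)}\Bigr)\le \exp\Bigl(-\frac{1}{a(a-1)}\sum_{v\in A}d_v(d_v-1)\Bigr),
\]
using $1-x\le e^{-x}$.

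Next I would lower-bound $\sum_{v\in A} d_v(d_v-1)$. The key observation is that $\sum_{v\in A} d_v$ counts the edges between $A$ and $Z$, which is exactly $\deg_H(Z)$; call this $S$. So $S\ge (\sqrt q\,a+q)n$ by hypothesis, and by Cauchy--Schwarz
\[
\sum_{v\in A} d_v^{\,2}\ge \frac{S^2}{|A|}=\frac{S^2}{qn}.
\]
Therefore
\[
\sum_{v\in A} d_v(d_v-1)\ge \frac{S^2}{qn}-S=S\Bigl(\frac{S}{qn}-1\Bigr)\ge S\cdot\frac{a}{\sqrt q},
\]
where the last inequality uses $S/(qn)\ge (\sqrt q\,a+q)/q=a/\sqrt q+1$.

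Putting the two steps together and using $S\ge (\sqrt q\,a+q)n=n\sqrt q(a+\sqrt q)$,
\[
\frac{1}{a(a-1)}\sum_{v\in A} d_v(d_v-1)\ge \frac{S}{\sqrt q\,(a-1)}\ge \frac{n(a+\sqrt q)}{a-1}\ge \frac{a}{a-1}\,n>(\ln 2)\,n,
\]
so the probability above is less than $e^{-n\ln 2}=2^{-n}$, which is what we want. The only step needing any care is the Cauchy--Schwarz bound, which is tight precisely when all $d_v$ are equal; the hypothesis $\deg_H(Z)\ge(\sqrt q\,a+q)n$ is tuned so that even in this worst case the exponent beats $n\ln 2$. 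I do not foresee a real obstacle beyond chaining these inequalities correctly.
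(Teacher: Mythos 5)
Your proof is correct and follows essentially the same route as the paper's: express the independence probability as a product over $v\in A$, bound it by $\exp\bigl(-\tfrac{1}{a(a-1)}\sum_v d_v(d_v-1)\bigr)$, and lower-bound $\sum_v d_v(d_v-1)$ via Cauchy--Schwarz (the paper calls it the quadratic--arithmetic mean inequality) together with the hypothesis on $\deg_H(Z)$. The only cosmetic difference is that the paper weakens $a(a-1)$ to $a^2$ early on to land exactly on $\exp(-n)<2^{-n}$, whereas you keep $a(a-1)$ and finish with $\tfrac{a}{a-1}>\ln 2$; both are fine.
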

\begin{proof}
For each vertex $v\in A$, let $d(v)$ denote the number of neighbors of $v$ in $Z$.
Then the probability $p$ that $Z$ is an independent set in $H_B$ is
\begin{align*}
p&=\prod_{v\in A}\Bigl(1-\frac{d(v)(d(v)-1)}{a(a-1)}\Bigr)\le \prod_{v\in A}\Bigl(1-\frac{d(v)(d(v)-1)}{a^2}\Bigr)\\
&\le\text{exp}\Bigl(-\frac{1}{a^2}\sum_{v\in A}d(v)(d(v)-1)\Bigr).
\end{align*}
Note that $\sum_{v\in A} d(v)=\deg_H(Z)$.  Using the well-known inequality between the quadratic and the arithmetic mean
$\sum_{i=1}^m x^2_i\ge \tfrac{1}{m}\Bigl(\sum_{i=1}^m x_i\Bigr)^2$,
we conclude
\begin{align*}
\sum_{v\in A}d(v)(d(v)-1)&=\sum_{v\in A}d^2(v)-\deg_H(Z)\ge \frac{\deg_H^2(Z)}{qn}-\deg_H(Z)\\
&=\frac{\deg_H(Z)(\deg_H(Z)-qn)}{qn}\ge \frac{(\sqrt{q}a+q)n\cdot \sqrt{q}an}{qn}\ge a^2n.
\end{align*}
Consequently,
$$p\le\text{exp}(-n)<2^{-n}.$$
\end{proof}

Since $B$ has only $2^n$ subsets, with non-zero probability no subset of large weight with respect to
the $\deg_H$ weight function is independent in $H_B$.
\begin{corollary}\label{cor-wind}
Let $a\ge 2$ and $q\ge 1$ be integers.
Let $H$ be a bipartite graph with the bipartition $(A,B)$ such that vertices of $A$ have degree exactly $a$
and $|A|=q|B|$.  With non-zero probability, $\deg_H(Z)<(\sqrt{q}a+q)|B|$ for every independent set $Z$ in $H_B$.
\end{corollary}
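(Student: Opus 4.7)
The plan is to derive Corollary~\ref{cor-wind} directly from Lemma~\ref{lemma-weight} via a straightforward union bound over the subsets of $B$, exactly as hinted in the sentence preceding the corollary.

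First I would set $n=|B|$ and call a set $Z\subseteq B$ \emph{heavy} if $\deg_H(Z)\ge (\sqrt{q}a+q)n$. The event whose probability I want to bound from below is ``no heavy $Z$ is independent in $H_B$''. Equivalently, I want to show that the complementary event --- some heavy $Z$ is independent in $H_B$ --- has probability strictly less than $1$.

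For each individual heavy $Z$, Lemma~\ref{lemma-weight} gives
\[
\text{Prob}[Z\text{ is independent in }H_B] < 2^{-n}.
\]
Summing over all heavy subsets of $B$, and using the crude bound that there are at most $2^n=2^{|B|}$ such subsets, the union bound yields
\[
\text{Prob}[\text{some heavy }Z\text{ is independent in }H_B] < 2^n\cdot 2^{-n}=1.
\]
Hence the complementary event has strictly positive probability, which is exactly the conclusion of the corollary.

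There is no real obstacle here: Lemma~\ref{lemma-weight} was stated with the factor $2^{-n}$ precisely so that the union bound over the $2^n$ subsets of $B$ leaves a tiny positive margin. The only thing worth double-checking while writing it out is that $n$ really denotes $|B|$ in both statements (it does, by the hypothesis setting $n=|B|$ in Lemma~\ref{lemma-weight}), so that the exponent $-n$ in the conclusion of the lemma matches the number of subsets of $B$ counted in the union bound.
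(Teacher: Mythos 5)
Your proof is correct and is exactly the argument the paper intends: a union bound over the at most $2^n$ subsets of $B$, using the strict bound $\text{Prob}[Z\text{ independent}]<2^{-n}$ from Lemma~\ref{lemma-weight} to get total failure probability strictly below $2^n\cdot 2^{-n}=1$. The paper compresses this into the one-line remark preceding the corollary, so there is no divergence of approach.
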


This clearly gives a lower bound on the fractional chromatic number of $H_B$.
\begin{corollary}\label{cor-frac}
Let $a\ge 2$ and $q\ge 1$ be integers.
Let $H$ be a bipartite graph with the bipartition $(A,B)$ such that vertices of $A$ have degree exactly $a$
and $|A|=q|B|$.
With non-zero probability, $\chi_f(H_B)>\frac{\deg_H(B)}{(\sqrt{q}a+q)|B|}=\frac{qa}{\sqrt{q}a+q}$.
\end{corollary}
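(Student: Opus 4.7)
The plan is to apply Corollary~\ref{cor-wind} directly, using the degree function $\deg_H$ restricted to $B$ as the weight assignment in the definition of $\chi_f(H_B)$. Since $a\ge 2$ and $|A|=q|B|\ge 1$, the function $\deg_H\colon B\to\mathbb{Z}^+$ is not identically zero (indeed, $\deg_H(B)=\sum_{v\in A}\deg_H(v)=a|A|=aq|B|>0$), so it is a valid weight assignment on the vertex set of the random graph $H_B$.

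Next, I would invoke Corollary~\ref{cor-wind} to obtain, with non-zero probability, the event that every independent set $Z$ in $H_B$ satisfies $\deg_H(Z)<(\sqrt{q}a+q)|B|$. On this event, we immediately get
\[
\alpha_{\deg_H}(H_B)\;=\;\max\{\deg_H(Z):Z\subseteq B\text{ independent in }H_B\}\;<\;(\sqrt{q}a+q)|B|.
\]

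Finally, by the definition of the fractional chromatic number (the supremum of $w(V(G))/\alpha_w(G)$ over nonzero nonnegative weight assignments, which is attained since $H_B$ is finite), plugging in $w=\deg_H$ gives
\[
\chi_f(H_B)\;\ge\;\frac{\deg_H(B)}{\alpha_{\deg_H}(H_B)}\;>\;\frac{aq|B|}{(\sqrt{q}a+q)|B|}\;=\;\frac{qa}{\sqrt{q}a+q},
\]
as required. I do not anticipate any real obstacle: the entire content of the corollary has been packaged into Corollary~\ref{cor-wind}, and what remains is only to identify the right weight function and to compute $\deg_H(B)=aq|B|$ from the assumption that every vertex of $A$ has degree exactly $a$ with $|A|=q|B|$. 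The strictness of the inequality is preserved because $H_B$ is a finite graph, so the bound on $\alpha_{\deg_H}(H_B)$ is a strict inequality between two integers (or, equivalently, the maximum is attained).
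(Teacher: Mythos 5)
Your proposal is correct and follows essentially the same route as the paper: apply Corollary~\ref{cor-wind} to get $\alpha_{\deg_H}(H_B)<(\sqrt{q}a+q)|B|$ with nonzero probability, then plug the weight function $\deg_H$ into the definition of $\chi_f$. The only additions are the explicit (and correct) computation $\deg_H(B)=aq|B|$ and the sanity check that $\deg_H$ is a valid weight assignment, both of which the paper leaves implicit.
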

\begin{proof}
With non-zero probability, $\deg_H(Z)<(\sqrt{q}a+q)|B|$ for every independent set $Z$ in $H_B$.
Hence, $\alpha_{\deg_H}(H_B)<(\sqrt{q}a+q)|B|$, and
$$\chi_f(H_B)\ge \frac{\deg_H(V(H_B))}{\alpha_{\deg_H}(H_B)}>\frac{\deg_H(B)}{(\sqrt{q}a+q)|B|}.$$
\end{proof}

We are now ready to prove our first main result.

\begin{proof}[Proof of Theorem~\ref{thm-frac}]
Suppose $G$ is a graph of average degree at least $256c^3$, where $c\ge 10$.  Letting $a=2c$ and $q=a^2=4c^2$,
the average degree of $G$ is at least $32aq$, and thus by Lemma~\ref{lemma-semireg}, $G$ 
contains a bipartite subgraph $H$ with the bipartition $(A,B)$ satisfying $|A|=q|B|$ and every vertex of $A$ having degree exactly $a$.
By Corollary~\ref{cor-frac}, with non-zero probability the graph $H_B$ has fractional chromatic number
greater than $\frac{qa}{\sqrt{q}a+q}=c$.
Note that the $1$-subdivision of $H_B$ is a subgraph of $H$, concluding the proof.
\end{proof}

\section{Hall ratio}\label{sec-hr}
Before proceeding to the proof of Theorem~\ref{thm-hallr}, let us provide some intuition.
If we could prove a variant of Lemma~\ref{lemma-semireg} where not only the vertices of $A$, but also the vertices
of $B$ were of (roughly) the same degree, then Corollary~\ref{cor-wind} would give a lower bound on the Hall ratio of $H_B$.
However, it is known that this is not possible; there are graphs which do not contain any such (nearly) regular subgraphs,
as shown by Pyber, R{\"o}dl, and Szemer{\'e}di~\cite{PRS}.  Hence, it is natural to consider the graphs with this property that they
constructed (with a slightly different choice of the parameters).

Let $M$ be a positive integer, let $\varepsilon_M=4^{-M-1}$, and let $n$ be the $4^M$-th power of an integer.
Let $G_{n,M}$
be the random graph whose vertex set consists of disjoint sets $A$, $B_1$, \ldots, $B_M$,
with $|A|=n$ and $|B_i|=n^{1-\varepsilon_M4^i}$ for $i=1,\ldots,M$,
with edge set obtained as follows. For each vertex $u\in A$ and $i=1,\ldots,M$,
choose a vertex $v\in B_i$ independently uniformly at random and add the
edge $uv$.  Let $B=B_1\cup\ldots\cup B_M$. Clearly, $G_{n,M}$ is bipartite
and $|E(G_{n,M})|=nM$. Furthermore, for $n$ sufficiently large, we have
$|B|\le n$, and thus the average degree of $G_{n,M}$ is at least $M$.
For $v\in B$, let $i(v)$ denote the index such that $v\in B_{i(v)}$.

Suppose the $1$-subdivision of a graph $H$ appears in another graph $G$.
We call the vertices of the $1$-subdivision corresponding to vertices of $H$
the \emph{branch vertices} and those corresponding to the edges of $H$ the \emph{subdivision vertices}.
For an integer $m\in\{2,\ldots, M\}$, let $\eB_m$ denote the
event that $G_{n,M}$ contains the $1$-subdivision of a graph $H$
of average degree at least $8$, such that $|V(H)|\le |B_m|$
and all the branch vertices of the $1$-subdivision are contained in
$B_1\cup \ldots \cup B_{m-1}$.

\begin{lemma}\label{lemma-notinb}
Let $M$ be a positive integer.  For a sufficiently large $4^M$-th power $n$,
the probability
that $\bigvee_{m=2}^M \eB_m$ holds in $G_{n,M}$ is less than $1/2$.
\end{lemma}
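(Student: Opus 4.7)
My plan is to union bound over $m\in\{2,\ldots,M\}$ and, for each $m$, to control $\Pr[\eB_m]$ via the first moment of a suitable \emph{witness count}. Since $G_{n,M}$ is bipartite with parts $A$ and $B$ and the branch vertices of any $1$-subdivision counted by $\eB_m$ lie in $B_1\cup\cdots\cup B_{m-1}\subseteq B$, the subdivision vertices must lie in $A$. After deleting isolated vertices from $H$ (which only increases its average degree) a witness can be taken to be a triple $(V,H,\phi)$ with $V=V(H)\subseteq B_1\cup\cdots\cup B_{m-1}$, $|V|=k\le|B_m|$, $H$ a simple graph with no isolated vertices and $\ell=|E(H)|\ge 4k$ (the average-degree-$\ge 8$ condition), and $\phi\colon E(H)\to A$ an injection such that $\phi(vw)$ is adjacent in $G_{n,M}$ to both $v$ and $w$ for every $vw\in E(H)$.

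For a fixed witness, the independent uniform neighbor-choices of each $u\in A$ give realization probability $0$ unless every edge $vw$ satisfies $i(v)\ne i(w)$, in which case the probability equals $\prod_{vw\in E(H)}\tfrac{1}{|B_{i(v)}|\,|B_{i(w)}|}$. Summing this product over all simple graphs $H$ on a fixed $V$ with $\ell$ edges, via the elementary-symmetric-polynomial bound $e_\ell(w_1,\ldots,w_N)\le\tfrac{1}{\ell!}(\sum_i w_i)^\ell$, yields $S(V)^\ell/\ell!$ with $S(V)=\sum_{i<j}|V\cap B_i|\,|V\cap B_j|/(|B_i||B_j|)$. Multiplying by $n^\ell$ (an upper bound on the number of injections $\phi$), summing over $V$ via $\binom{M|B_1|}{k}\le(eM|B_1|/k)^k$, applying Stirling, and substituting the worst case $\ell=4k$ (the dependence on $\ell$ is unimodal and the peak lies below $4k$ for large $n$) leads to a per-$k$ estimate of the form $R^k$, where
$$R=\frac{e^5\,M\,n^4\,|B_m|^3\,|B_1|}{8^4\,|B_{m-1}|^8}.$$

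Plugging in $|B_i|=n^{1-\varepsilon_M 4^i}$ and simplifying, the exponent of $n$ in $R$ collapses to $-\varepsilon_M(4^m+4)$, so $R=o(1)$ as $n\to\infty$ for each fixed $M$ and each $m\ge 2$. Crucially, the constraint $\ell\ge 4k$, which is exactly the average-degree-$\ge 8$ hypothesis, is what forces this exponent below $0$: a weaker density assumption would leave the exponent nonnegative and render the first-moment bound vacuous. The choice $\varepsilon_M=4^{-M-1}$ is then calibrated precisely so that this negativity persists uniformly in $m\in\{2,\ldots,M\}$.

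The remainder of the argument is routine bookkeeping: once $R<1$, the inner geometric tail $\sum_{\ell\ge 4k}$ and the outer sum $\sum_{k\ge 1}R^k$ each contribute only constant factors, so $\Pr[\eB_m]=O(R)=o(1)$, and the union bound over $m$ gives $\Pr\bigl[\bigvee_{m=2}^M\eB_m\bigr]<1/2$ for $n$ sufficiently large. The main obstacle is producing $R$ with exactly the right exponent of $n$: one must distribute factors of $|B_i|$ carefully between the sums over $V$ and over $H$, apply the monotonicity $|B_i|\ge|B_{m-1}|$ for $i\le m-1$ without losing anything essential, and verify that the exponents telescope—this is where the specific scaling $|B_i|=n^{1-\varepsilon_M 4^i}$ is doing all the work.
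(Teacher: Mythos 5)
Your argument is correct and is essentially the paper's: a union bound over $m$ combined with a first-moment count of $1$-subdivision witnesses, using $s\le |B_m|$ and $|B_i|\ge|B_{m-1}|$ to force the $n$-exponent negative once the average degree (hence $\ell\ge 4k$) is imposed. The paper organizes the count slightly differently — it picks the $s+t$ branch and subdivision vertices at once via $\binom{2n}{s+t}\le (2e)^{2t}(n/s)^{s+t}$, then pays $s^{2t}$ for the edge pattern and $n^{2(\varepsilon_M 4^{m-1}-1)}$ per edge — whereas you fix $V(H)$, sum over $H$ via the elementary-symmetric-polynomial bound, and pay $n^{\ell}$ for the injection $\phi$; the two bookkeepings land on the same exponent $-\varepsilon_M(4^m+4)$ (modulo constant powers of $M$, which you drop but which do not matter). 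One small inaccuracy in your commentary: the choice $\varepsilon_M=4^{-M-1}$ is calibrated so that $|B_i|$ is a positive power of $n$ and $|B|\le n$, not to force negativity of the exponent — any positive $\varepsilon_M$ makes $-\varepsilon_M(4^m+4)<0$ — and the density threshold is actually around average degree $4$ (i.e., $\ell\ge 2k$) rather than $8$; the paper's choice of $8$ simply gives comfortable slack.
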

\begin{proof}
For integers $m\in\{2,\ldots, M\}$, $s$ such that $1\le s\le |B_m|$ and $t\ge 4s$,
let $\eB_{m,s,t}$ denote the event that $G_{n,M}$ contains the $1$-subdivision of a graph $H$
with $s$ vertices and $t$ edges, such that all the branch vertices of the $1$-subdivision are contained in 
$B_1\cup \ldots \cup B_{m-1}$.

Let us bound the probability of $\eB_{m,s,t}$.  Recall that for $n$ large enough, we have $|B|\le n$.
We can choose the branch vertices and subdivision vertices of $H$ in at most
\begin{equation}\label{eq:fbr}
\binom{n+|B_1\cup \ldots \cup B_{m-1}|}{s+t}\le \binom{2n}{s+t}\le (2e)^{2t}(n/s)^{s+t}
\end{equation}
ways.  For the selected subdivision vertices, we can choose two branch vertices to which they are adjacent
in at most
\begin{equation}\label{eq:fgr}
s^{2t}
\end{equation}
ways, thus determining the graph $H$.  Now, suppose that $z$ is a subdivision vertex
representing the edge $uv$ of $H$.  The probability that $G_{n,M}$ contains the edges $zu$ and $zv$
is $0$ if $i(u)=i(v)$, and $n^{\varepsilon_M(4^{i(u)}+4^{i(v)})-2}\le n^{2(\varepsilon_M4^{m-1}-1)}$ otherwise.
Consequently, the probability that the chosen $1$-subdivision of $H$ actually appears in $G_{n,M}$ as a subgraph is at most
\begin{equation}\label{eq:fpr}
n^{2t(\varepsilon_M4^{m-1}-1)}.
\end{equation}
Hence, using (\ref{eq:fbr}), (\ref{eq:fgr}), (\ref{eq:fpr}), and the assumption that $s\le |B_m|=n^{1-\varepsilon_M4^m}$, we have
\begin{align*}
\text{Prob}[\eB_{m,s,t}]&\le (2e)^{2t}(n/s)^{s+t}s^{2t}n^{2t(\varepsilon_M4^{m-1}-1)}\\
&=(2e)^{2t}(s/n)^{t-s}n^{2t\varepsilon_M4^{m-1}}\\
&\le (2e)^{2t}\Bigl(n^{\varepsilon_M4^{m-1}}\Bigr)^{4s-2t}.
\end{align*}
Since $t\ge 4s$, we conclude that for sufficiently large $n$, we have
$$\text{Prob}[\eB_{m,s,t}]\le (2e)^{2t}\Bigl(n^{\varepsilon_M4^{m-1}}\Bigr)^{-t}\le (8M)^{-t}.$$
Note that
$$\eB_m=\bigvee_{s\ge 1}\bigvee_{t\ge 4s}\eB_{m,s,t},$$
and thus
\begin{align*}
\text{Prob}\left[\bigvee_{m=2}^M\eB_m\right]&\le \sum_{m=2}^M\sum_{s\ge 1}\sum_{t\ge 4s} (8M)^{-t}\\
&\le 2\sum_{m=2}^M\sum_{s\ge 1}(8M)^{-4s}\\
&\le 4\sum_{m=2}^M (8M)^{-4}<1/2,
\end{align*}
as required.
\end{proof}

For an integer $m\in\{2,\ldots, M\}$, let $\eA_m$ denote the
event that $G_{n,M}$ contains the $1$-subdivision of a graph $H$
of average degree at least $8$, such that $|V(H)|\le |B_m|$
and all the subdivision vertices of the $1$-subdivision are contained in
$B_1\cup \ldots \cup B_{m-1}$.

\begin{lemma}\label{lemma-notina}
Let $M$ be a positive integer.  For a sufficiently large $4^M$-th power $n$, the probability
that $\bigvee_{m=2}^M \eA_m$ holds in $G_{n,M}$ is less than $1/2$.
\end{lemma}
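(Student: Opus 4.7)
The plan is to mirror the proof of Lemma~\ref{lemma-notinb} almost verbatim, exploiting the fact that both $G_{n,M}$ and every $1$-subdivision are bipartite: since $\eA_m$ places all subdivision vertices in $B_1\cup\cdots\cup B_{m-1}\subseteq B$, and edges of the $1$-subdivision go between branch and subdivision vertices, every non-isolated branch vertex of $H$ must lie in $A$. Deleting isolated vertices of $H$ if necessary (this only decreases $s=|V(H)|$ while preserving $|E(H)|=t$ and the average-degree-$\ge 8$ hypothesis), I may assume every branch vertex sits in $A$. For $m\in\{2,\dots,M\}$, $1\le s\le |B_m|$, and $t\ge 4s$, I would define $\eA_{m,s,t}$ in the obvious way mirroring $\eB_{m,s,t}$, so that $\eA_m\subseteq\bigvee_{s}\bigvee_{t\ge 4s}\eA_{m,s,t}$.

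The key step is to bound $\text{Prob}[\eA_{m,s,t}]$ by three factors. (i) For $n$ sufficiently large, the number of ways to select $s$ branch vertices in $A$ and $t$ subdivision vertices in $B_1\cup\cdots\cup B_{m-1}$ is at most $\binom{2n}{s+t}\le (2e)^{2t}(n/s)^{s+t}$. (ii) Prescribing the two branch endpoints of each subdivision vertex costs at most $s^{2t}$. (iii) Given the resulting embedding, for each subdivision vertex $z\in B_{i(z)}$ with prescribed branch neighbors $u_1,u_2\in A$, the edges $u_1z$ and $u_2z$ appear iff each of $u_1,u_2$ independently picked $z$ in its random choice in $B_{i(z)}$, with joint probability exactly $|B_{i(z)}|^{-2}$; infeasible configurations, in which some $u\in A$ is prescribed two neighbors in a common $B_i$, simply contribute $0$. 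Multiplying over the $t$ subdivision vertices bounds the edge-existence probability by $|B_{m-1}|^{-2t}=n^{2t(\varepsilon_M 4^{m-1}-1)}$, and the product of the three factors yields
\[
\text{Prob}[\eA_{m,s,t}]\le (2e)^{2t}(n/s)^{s+t}\,s^{2t}\,n^{2t(\varepsilon_M 4^{m-1}-1)},
\]
which is exactly the bound derived for $\eB_{m,s,t}$.

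The remainder of the argument is then literally the argument of Lemma~\ref{lemma-notinb}: using $s\le |B_m|=n^{1-\varepsilon_M 4^m}$ and $t\ge 4s$, the bound simplifies to $(2e)^{2t}(n^{\varepsilon_M 4^{m-1}})^{4s-2t}\le (8M)^{-t}$ for any sufficiently large $4^M$-th power $n$, and the same union bound over $m$, $s$, and $t$ gives $\text{Prob}[\bigvee_{m=2}^M\eA_m]<1/2$. There is no genuine obstacle; the only conceptual point worth flagging is the source of the per-subdivision-vertex factor in (iii): here it arises from two \emph{distinct} $A$-vertices independently landing on the same target in $B_{i(z)}$, whereas in Lemma~\ref{lemma-notinb} it arose from one $A$-vertex making two independent choices in two distinct layers $B_{i_1}$, $B_{i_2}$. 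The numerical bound is identical, which is precisely why the rest of the estimate and the union bound transfer without change.
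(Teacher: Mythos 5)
Your proof is correct.  Let me verify the one step that needs care: you multiply the per-subdivision-vertex probabilities $|B_{i(z)}|^{-2}$ over the $t$ subdivision vertices.  Each such event is a conjunction of two atomic events of the form $c_{u,i}=z$ (where $c_{u,i}\in B_i$ is $u$'s random choice).  Across distinct subdivision vertices, either all these atomic events involve pairwise distinct random variables $c_{u,i}$ — in which case the joint probability is exactly the product — or two events force the same $c_{u,i}$ to equal two distinct vertices, making the joint probability $0$.  Your remark about infeasible configurations covers exactly this second case, so the product is a valid upper bound, and the rest follows just as in the proof of Lemma~\ref{lemma-notinb}.

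The route is, however, genuinely different from the paper's own proof of Lemma~\ref{lemma-notina}, which does not mirror Lemma~\ref{lemma-notinb}.  The paper chooses only the $s$ branch vertices (cost $\binom{n}{s}$), the $t$ edges of $H$ (cost $\binom{\binom{s}{2}}{t}$), and a layer-assignment $i:E(H)\to\{1,\dots,M\}$ (cost $M^t$); it does \emph{not} choose the concrete subdivision vertices.  It then observes that for each layer $i$ the set $Q_i$ of edges subdivided in $B_i$ must be a matching, and conditions on one endpoint's choice to obtain a probability $|B_i|^{-1}$ per edge of $Q_i$, i.e., a total probability factor $\prod_i|B_i|^{-|Q_i|}\le|B_{m-1}|^{-t}$.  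Your decomposition instead pays roughly an extra factor $|B_{m-1}|^{t}$ in the counting (by also selecting the $t$ subdivision vertices) and saves the same factor in the probability (obtaining $|B_{m-1}|^{-2t}$), so the two bookkeepings balance out.  The paper's bound is in fact slightly sharper — it yields exponent $\varepsilon_M4^{m-1}(4s-3t)$ versus your $\varepsilon_M4^{m-1}(4s-2t)$ — but both suffice, since $t\ge 4s$.  The trade-off is clarity: your version makes the duality with Lemma~\ref{lemma-notinb} entirely transparent and the role of the two independent $A$-choices landing on a common target explicit; the paper's version is tighter and exposes the matching structure $Q_i$, which also reappears in the later Lemma~\ref{lemma-setsina}.
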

\begin{proof}
For integers $m\in\{2,\ldots, M\}$, $s$ such that $1\le s\le |B_m|$ and $t\ge 4s$,
let $\eA_{m,s,t}$ denote the event that $G_{n,M}$ contains the $1$-subdivision of a graph $H$
with $s$ vertices and $t$ edges, such that all the subdivision vertices of the $1$-subdivision are contained in 
$B_1\cup \ldots \cup B_{m-1}$.

Let us bound the probability of $\eA_{m,s,t}$.
We can choose the branch vertices in at most
\begin{equation}\label{eq:bv}
\binom{n}{s}\le (en/s)^s
\end{equation} ways.  The edges of $H$ can be selected
in
\begin{equation}\label{eq:es}
\binom{\binom{s}{2}}{t}\le \binom{s^2}{t}\le (es^2/t)^t\le (es)^t
\end{equation}
ways.
For each edge $e$ of $H$, let $i(e)$ denote the index
such that the subdivision vertex representing $e$ belongs to $B_{i(e)}$; note that there are at most
\begin{equation}\label{eq:chi}
M^t
\end{equation}
functions $i:E(H)\to \{1,\ldots, M\}$.
For $i=1,\ldots,m-1$, let $Q_i=\{e\in E(H):i(e)=i\}$.  Since each vertex of $A$ has exactly one neighbor in $B_i$, observe
that $Q_i$ must be a matching.  The event $C_i$ that for every $e=uv\in Q_i$, the vertices $u$ and $v$ have a common neighbor in $B_i$,
has probability $|B_i|^{-|Q_i|}$ (once the neighbor $x$ of $u$ is selected, the probability that $v$ is also adjacent to $x$ is
$|B_i|^{-1}$, and since $Q_i$ is a matching, the events for distinct edges of $Q_i$ are independent).  The events $C_1$, \ldots, $C_{m-1}$
are independent, and thus the probability that $G_{n,M}$ contains the $1$-subdivision of $H$ whose subdivision vertices are in the prescribed
sets $B_1$, \ldots, $B_{m-1}$ is at most
\begin{equation}\label{eq:prb}
\prod_{i=1}^{m-1}|B_i|^{-|Q_i|}\le |B_{m-1}|^{-t}=n^{(\varepsilon_M4^{m-1}-1)t}.
\end{equation}
Hence, using (\ref{eq:bv})--(\ref{eq:prb}) and the assumptions that $s\le |B_m|=n^{1-\varepsilon_M4^m}$ and $t\ge 4s$, we have
\begin{align*}
\text{Prob}[\eA_{m,s,t}]&\le (en/s)^s(es)^tM^tn^{(\varepsilon_M4^{m-1}-1)t}\\
&\le (e^2M)^t (s/n)^{t-s}n^{\varepsilon_M4^{m-1}t}\\
&\le (e^2M)^tn^{\varepsilon_M4^{m-1}(4s-3t)}\\
&\le (e^2M)^t\Bigl(n^{\varepsilon_M4^{m-1}}\Bigr)^{-t}.
\end{align*}
We conclude that for sufficiently large $n$, we have $\text{Prob}[\eA_{m,s,t}]\le (8M)^{-t}$, and calculating
as at the end of the proof of Lemma~\ref{lemma-notinb}, we obtain
$$\text{Prob}\left[\bigvee_{m=2}^M\eA_m\right]<1/2.$$
\end{proof}

Combining Lemmas~\ref{lemma-notinb} and \ref{lemma-notina}, we obtain the following.

\begin{corollary}\label{cor-nM}
For any positive integer $M$ and a sufficiently large $4^M$-th power $n$, there exists a graph
$\widetilde{G}_{n,M}$ with vertex set consisting of disjoint sets $A$, $B_1$, \ldots, $B_m$,
with $|A|=n$ and $|B_i|=n^{1-\varepsilon_M4^i}$ for $i=1,\ldots,M$, such that each vertex of $A$
has exactly one neighbor in $B_i$ for $I=1,\ldots, M$, and neither $\eA_m$ nor $\eB_m$ holds
for any $m\in\{2,\ldots, M\}$.
\end{corollary}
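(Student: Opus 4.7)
The plan is to apply the probabilistic method to the random graph $G_{n,M}$ introduced just before Lemma~\ref{lemma-notinb}. The structural requirements demanded of $\widetilde{G}_{n,M}$ (the sizes of $A$ and $B_1,\ldots,B_M$, and the property that every $u\in A$ has exactly one neighbor in each $B_i$) are already built into the definition of $G_{n,M}$ itself. So the only thing left to arrange is the avoidance of the events $\eA_m$ and $\eB_m$, and this is precisely what the two lemmas just proved accomplish.

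Concretely, I would fix a positive integer $M$ and choose $n$, a $4^M$-th power, large enough that the conclusions of both Lemma~\ref{lemma-notinb} and Lemma~\ref{lemma-notina} apply. Then, by those lemmas,
\[
\text{Prob}\!\left[\bigvee_{m=2}^{M}\eB_m\right] < \tfrac{1}{2}, \qquad \text{Prob}\!\left[\bigvee_{m=2}^{M}\eA_m\right] < \tfrac{1}{2}.
\]
A union bound then yields
\[
\text{Prob}\!\left[\bigvee_{m=2}^{M}\eA_m \ \vee\ \bigvee_{m=2}^{M}\eB_m\right] < 1,
\]
so with strictly positive probability the sample $G_{n,M}$ avoids all the events $\eA_m$ and $\eB_m$. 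Any such sample can be taken as $\widetilde{G}_{n,M}$: by construction it has vertex set $A\cup B_1\cup\cdots\cup B_M$ with the correct cardinalities, every vertex of $A$ has exactly one neighbor in each $B_i$ (since $G_{n,M}$ is defined by making each $u\in A$ pick one neighbor in each $B_i$), and neither $\eA_m$ nor $\eB_m$ holds for any $m\in\{2,\ldots,M\}$.

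There is really no main obstacle here, since the two preceding lemmas have already done all of the combinatorial counting work. The only minor care needed is to choose $n$ large enough that the ``sufficiently large'' conditions from both Lemma~\ref{lemma-notinb} and Lemma~\ref{lemma-notina} are simultaneously satisfied (taking the maximum of the two thresholds), and to note that the two probability bounds of $1/2$ are chosen so that the union bound leaves a non-trivial success probability, giving an honest existence statement via the probabilistic method.
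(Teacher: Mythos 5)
Your proof is correct and is exactly the argument the paper has in mind; the paper simply states ``Combining Lemmas~\ref{lemma-notinb} and \ref{lemma-notina}, we obtain the following'' without spelling out the union bound, which you do.
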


By Tur\'an's theorem, we have the following.
\begin{lemma}\label{lemma-avgdeg}
Every graph $G$ has average degree at least $\tfrac{|V(G)|}{\alpha(G)}-1$.
\end{lemma}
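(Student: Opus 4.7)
The plan is to rearrange the claim and invoke Tur\'an's theorem on the complement. Writing $n=|V(G)|$, $\alpha=\alpha(G)$, and $\bar d = 2|E(G)|/n$ for the average degree, the inequality $\bar d \ge n/\alpha - 1$ is equivalent to $|E(G)| \ge \frac{n^2}{2\alpha}-\frac{n}{2}$, i.e., to an appropriate lower bound on the edge count coming from independence-number information. This suggests passing to the complement, where independent sets become cliques.

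Concretely, since $\alpha(G)=\omega(\overline{G})$, the complement $\overline{G}$ contains no $K_{\alpha+1}$. The (loose) form of Tur\'an's theorem for $K_{r+1}$-free graphs then gives $|E(\overline{G})| \le \bigl(1-\tfrac{1}{\alpha}\bigr)\tfrac{n^2}{2}$. Combining with $|E(G)|+|E(\overline{G})|=\binom{n}{2}$, I would compute
\begin{equation*}
|E(G)| \ge \binom{n}{2} - \Bigl(1-\tfrac{1}{\alpha}\Bigr)\tfrac{n^2}{2} = \tfrac{n^2}{2\alpha} - \tfrac{n}{2},
\end{equation*}
and dividing by $n/2$ yields $\bar d \ge n/\alpha - 1$.

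There is really no obstacle here: the whole proof is a one-line manipulation once the right form of Tur\'an is applied to $\overline{G}$. An essentially equivalent alternative route is the Caro--Wei bound $\alpha(G)\ge \sum_{v\in V(G)} \tfrac{1}{\deg(v)+1}$ combined with Jensen's inequality for the convex function $x\mapsto 1/(x+1)$, which directly gives $\alpha(G)\ge n/(\bar d+1)$; either derivation works, and the only care needed is to cite the version of Tur\'an with the clean bound $(1-1/r)n^2/2$ rather than the exact extremal value.
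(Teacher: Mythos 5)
Your argument is correct and is exactly what the paper has in mind: the paper simply states the lemma with the attribution ``By Tur\'an's theorem,'' and your derivation via $\omega(\overline{G})=\alpha(G)$, the loose Tur\'an bound $|E(\overline{G})|\le(1-1/\alpha)n^2/2$, and complementation is the standard way to unpack that citation. The Caro--Wei alternative you mention is also fine, but the Tur\'an-on-the-complement route is the one the paper is invoking.
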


Let us now argue about independent sets in $1$-subdivisions appearing in $\widetilde{G}_{n,M}$.
\begin{lemma}\label{lemma-setsinb}
For any positive integer $M$ and a sufficiently large $4^M$-th power $n$, if $H$ is a graph
whose $1$-subdivision appears in $\widetilde{G}_{n,M}$ with all branch vertices contained
in $B=B_1\cup\ldots\cup B_M$, then $\alpha(H)\ge |V(H)|/18$.
\end{lemma}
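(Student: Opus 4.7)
Since $V(H)\subseteq B$ and $(A,B)$ is the bipartition of $\widetilde{G}_{n,M}$, every subdivision vertex of the $1$-subdivision of $H$ must lie in $A$: two distinct branch vertices $u,v$ in the same $B_i$ could not share an $A$-neighbor, because every $a\in A$ has exactly one neighbor in each $B_j$. Consequently, defining $V_i:=V(H)\cap B_i$ for $i=1,\ldots,M$, each $V_i$ is an independent set of $H$, so $\alpha(H)\ge N_i:=|V_i|$ trivially, and $N:=|V(H)|=\sum_i N_i$.

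The non-event $\eB_m$, guaranteed by Corollary~\ref{cor-nM} for every $m\in\{2,\ldots,M\}$, translates into: every subgraph $H'$ of $H^{(m-1)}:=H[V_1\cup\cdots\cup V_{m-1}]$ with $|V(H')|\le |B_m|$ has $d_{\text{avg}}(H')<8$. By Lemma~\ref{lemma-avgdeg}, such an $H'$ satisfies $\alpha(H')> |V(H')|/9$. Applying this to a subset of $V^{(m-1)}$ of maximum admissible size yields
\[
\alpha(H)\;\ge\;\alpha(H^{(m-1)})\;\ge\;\frac{\min(N^{(m-1)},|B_m|)}{9},\qquad N^{(m-1)}:=\sum_{i<m}N_i,
\]
for every $m\in\{2,\ldots,M\}$.

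To derive $\alpha(H)\ge N/18$, I plan to split into two cases. If $N_m\ge N/18$ for some $m$, we are done via the trivial bound. Otherwise all $N_i<N/18$, and the smallest index $m_b$ with $N^{(m_b)}\ge N/2$ automatically satisfies $m_b\ge 10$. I then aim to exhibit an $m$ with $N^{(m-1)}\ge N/2$ and $N^{(m-1)}\le |B_m|$ simultaneously; plugging such $m$ into the displayed bound yields $\alpha(H)\ge N/18$. The factor $18$ in the conclusion arises as Tur\'an's factor $9$ together with a factor $2$ coming from the requirement $N^{(m-1)}\ge N/2$.

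The main obstacle is locating such an $m$, which amounts to showing that the monotone-increasing sequence $\bigl(N^{(m-1)}\bigr)$ and the monotone-decreasing sequence $\bigl(|B_m|\bigr)$ cross at a point where both values are at least $N/2$. This will rely on the super-polynomial decay $|B_{i+1}|/|B_i|=n^{-3\varepsilon_M 4^i}$ (so that $\sum_{i>k}|B_i|\le 2|B_{k+1}|$ for $n$ large), together with the hypothesis that $N^{(m)}$ grows by less than $N/18$ per step. In the delicate boundary cases where the crossing falls just short of $N/2$, one has to recycle the trivial bound $\alpha(H)\ge N_m$ on a suitable high-index block to close the gap.
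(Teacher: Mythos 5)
Your overall strategy coincides with the paper's: use Corollary~\ref{cor-nM} (absence of $\eB_m$) together with Tur\'an's bound (Lemma~\ref{lemma-avgdeg}) to lower-bound independence from the block decomposition $V_i=V(H)\cap B_i$, and exploit the super-geometric decay of $|B_i|$. Your per-$m$ bound $\alpha(H)\ge\min(N^{(m-1)},|B_m|)/9$ is correct and is exactly the mechanism the paper uses. However, the step you label "the main obstacle" --- locating the right index $m$ --- is the entire content of the lemma, and your proposal leaves it as a plan, appealing to unresolved "delicate boundary cases" and "recycling of trivial bounds." As written this is a genuine gap: no specific $m$ is produced and no inequality chain is closed.

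The argument finishes cleanly once you pick $m$ by comparing the \emph{constant} $N:=|V(H)|$ against $|B_{m+1}|$ rather than chasing the crossing of $N^{(m-1)}$ against $|B_m|$: let $m$ be the smallest index in $\{1,\ldots,M\}$ with $N\ge|B_{m+1}|$ (convention $B_{M+1}=\emptyset$); minimality gives $N<|B_m|$ whenever $m\ge 2$. In your Case 2 (all $N_i<N/18$, which a proof by contradiction assuming $\alpha(H)<N/18$ gets for free, subsuming Case 1), for $n$ large one has $\sum_{i\ge m+2}N_i\le\sum_{i\ge m+2}|B_i|\le|B_{m+1}|/4\le N/4$, hence
\[
N^{(m-1)}=N-N_m-N_{m+1}-\sum_{i\ge m+2}N_i> N-\tfrac{2N}{18}-\tfrac{N}{4}>\tfrac{N}{2}.
\]
In particular $N^{(m-1)}>0$ forces $m\ge 2$, so $N^{(m-1)}\le N<|B_m|$, and your displayed bound yields $\alpha(H)\ge\min(N^{(m-1)},|B_m|)/9=N^{(m-1)}/9>N/18$. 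There are no delicate boundary cases and no need to recycle trivial bounds. This is precisely what the paper does (phrased as removing $B_m\cup\cdots\cup B_M$ from $H$ and applying $\eB_m$ to the remainder $H'$); with the choice of $m$ made explicit, your proposal and the paper's proof are the same argument.
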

\begin{proof}
Let $B_{M+1}=\emptyset$.
Suppose for a contradiction that $\alpha(H)<|V(H)|/18$.  Let $m\in \{1,\ldots, M\}$ be the smallest integer
such that $|V(H)|\ge |B_{m+1}|$.  For sufficiently large $n$, we have
$|B_{m+2}\cup\ldots\cup B_M|\le |B_{m+1}|/4\le |V(H)|/4$.  Since every vertex of $A$ has exactly one neighbor
in $B_i$ for $i=1,\ldots, M$, observe that $|V(H)\cap B_i|$ is an independent set in $H$,
and thus $|V(H)\cap B_i|\le\alpha(H)<|V(H)|/18$.  Consequently, letting $H'=H-(B_m\cup\ldots\cup B_M)$,
we have
\begin{align*}
|V(H')|&=|V(H)|-|V(H)\cap (B_{m+2}\cup\ldots\cup B_M)|-|V(H)\cap (B_m\cup B_{m+1})|\\
&\ge |V(H)|-|V(H)|/4-2\cdot |V(H)|/18>|V(H)|/2,
\end{align*}
and in particular $V(H')\neq \emptyset$; it follows that $m\ge 2$.
By the minimality of $m$, it follows that $|V(H)|<|B_m|$.
Furthermore, $\alpha(H')\le \alpha(H)<|V(H)|/18\le |V(H')|/9$, and thus
the average degree of $H'$ is at least $8$ by Lemma~\ref{lemma-avgdeg}.  Therefore, $H'$ shows that $\widetilde{G}_{n,M}$ satisfies
$\eB_m$, which is a contradiction.
\end{proof}

\begin{lemma}\label{lemma-setsina}
For any positive integer $M$ and a sufficiently large $4^M$-th power $n$, if $H$ is a graph
whose $1$-subdivision appears in $\widetilde{G}_{n,M}$ with all branch vertices contained
in $A$, then $\alpha(H)\ge |V(H)|/13$.
\end{lemma}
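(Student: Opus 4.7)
The plan is to mirror the argument of Lemma~\ref{lemma-setsinb}: assume for contradiction that $\alpha(H)<|V(H)|/13$, construct a subgraph $H'$ of $H$ witnessing the event $\eA_m$ for some $m\in\{2,\ldots,M\}$, and appeal to Corollary~\ref{cor-nM}. What changes is that $V(H)\subseteq A$ does not carry the partition into independent sets used in Lemma~\ref{lemma-setsinb}; in its place I would use the dual observation already exploited in the proof of Lemma~\ref{lemma-notina}: since every vertex of $A$ has a unique neighbour in each $B_i$, the set $Q_i$ of edges of $H$ whose subdivision vertex lies in $B_i$ is a matching on $V(H)$, so $|Q_i|\leq\min(|V(H)|/2,|B_i|)$ for every $i$.

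The first step is an \emph{a priori} size bound $|V(H)|\leq|B_{10}|$. By Lemma~\ref{lemma-avgdeg} the assumption converts to average degree greater than $12$, so $|E(H)|>6|V(H)|$. Assuming $n$ is large enough in $M$ that $|B_{i+1}|/|B_i|<1/200$ for every $i$, a geometric-series estimate gives $\sum_{i>j}|B_i|<1.01\,|B_{j+1}|$ for every $j$. Let $k$ be the largest index in $\{0,\ldots,M\}$ with $|B_k|\geq|V(H)|/2$. Splitting $|E(H)|\leq\sum_i|Q_i|$ at $k$ and using both upper bounds on $|Q_i|$ yields
$$|E(H)|\;\leq\;\frac{k|V(H)|}{2}+\sum_{i>k}|B_i|\;<\;\frac{k|V(H)|}{2}+\frac{1.01\,|V(H)|}{2},$$
whence the average degree of $H$ is strictly smaller than $k+1.01$. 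Combining with average degree greater than $12$ forces $k\geq 11$, that is $|V(H)|\leq 2|B_{11}|$; the ratio estimate then gives $2|B_{11}|\leq|B_{10}|$.

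The second step is the deletion argument. Let $m$ be the largest index in $\{1,\ldots,M\}$ with $|V(H)|\leq|B_m|$; by step one $m\geq 10\geq 2$. Let $H'$ be obtained from $H$ by deleting every edge whose subdivision vertex lies in $B_m\cup\cdots\cup B_M$; all subdivision vertices of the $1$-subdivision of $H'$ then lie in $B_1\cup\cdots\cup B_{m-1}$, and $|V(H')|=|V(H)|\leq|B_m|$. The number of removed edges is at most $|Q_m|+\sum_{i>m}|Q_i|\leq|V(H)|/2+\sum_{i>m}|B_i|$, and either $m=M$ (making the second sum vanish) or, by maximality of $m$, $|B_{m+1}|<|V(H)|$, in which case the geometric bound yields $\sum_{i>m}|B_i|<1.01\,|V(H)|$. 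Either way at most $(3/2+o(1))|V(H)|$ edges are deleted, so $|E(H')|>(9/2-o(1))|V(H)|>4|V(H')|$ and the average degree of $H'$ is strictly greater than $8$. Hence $H'$ witnesses $\eA_m$, contradicting Corollary~\ref{cor-nM}.

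The delicate point is that without the structural bound of step one the natural choice of $m$ could drop below $2$---precisely the regime $|V(H)|>|B_2|$, in which $Q_1$ alone is a matching and no subgraph of average degree $8$ can be produced from low-index edges. The matching bound $|Q_i|\leq|B_i|$ rules out this regime and is what enables the weaker constant $1/13$ here in place of the $1/18$ of Lemma~\ref{lemma-setsinb}.
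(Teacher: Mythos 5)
Your proof is correct, and the core deletion argument (step two) is essentially the same as the paper's: delete the edges whose subdivision vertices lie in $B_m\cup\cdots\cup B_M$, bound the number of deleted edges by a multiple of $|V(H)|$ using that each $Q_i$ is a matching of size at most $\min(|V(H)|/2,|B_i|)$, and conclude $H'$ witnesses $\eA_m$, contradicting Corollary~\ref{cor-nM}.

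The difference is that the paper does not need your step one, and your closing remark about the ``delicate point'' is where you go slightly astray. The paper chooses $m$ to be the \emph{smallest} index in $\{1,\ldots,M\}$ with $|V(H)|\ge|B_{m+1}|$ (with the convention $B_{M+1}=\emptyset$), so $m$ always exists. With this choice, the bound ``removed edges $\le 2|V(H)|$'' (at most $|V(H)|/2$ each from $B_m$ and $B_{m+1}$, and at most $|B_{m+2}\cup\cdots\cup B_M|\le|B_{m+1}|\le|V(H)|$ from the rest) holds for \emph{every} $m\ge 1$, giving $|E(H')|\ge 4|V(H)|>0$. In particular $E(H')\neq\emptyset$, which immediately forces $m\ge 2$ because $H'$ only uses subdivision vertices in $B_1\cup\cdots\cup B_{m-1}$; and then minimality of $m$ gives $|V(H)|<|B_m|$. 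So the regime $|V(H)|>|B_2|$ is not a case that needs separate calibration---it is ruled out on the spot by the vacuity of $H'$. Your step one is forced only by your own choice of $m$ as the \emph{largest} index with $|V(H)|\le|B_m|$, which may fail to exist; it is not wrong, but it is a detour, and it also obscures the small-$M$ case (for $M<11$ your conclusion ``$k\ge 11$'' is the contradiction, so $B_{10}$ need not even exist). A last small slip: $1/13>1/18$, so this lemma's constant is in fact the \emph{stronger} of the two lower bounds on $\alpha(H)$, not the weaker.
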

\begin{proof}
Let $B_{M+1}=\emptyset$.
Suppose for a contradiction that $\alpha(H)<|V(H)|/13$.  Let $m\in \{1,\ldots, M\}$ be the smallest integer
such that $|V(H)|\ge |B_{m+1}|$.  For sufficiently large $n$, we have
$|B_{m+2}\cup\ldots\cup B_M|\le |B_{m+1}|\le |V(H)|$, and thus at most $|V(H)|$ edges of $H$
use subdivision vertices in $B_{m+2}\cup\ldots\cup B_M$.  Since every vertex of $A$ has exactly one neighbor
in $B_i$ for $i=1,\ldots, M$, recall that the edges of $H$ using subdivision vertices in $B_i$ form a matching,
and thus there are at most $|V(H)|/2$ of them.  Consequently, letting $H'$ be the spanning subgraph of $H$
containing only the edges using subdivision vertices in $B_1\cup\ldots\cup B_{m-1}$,
we have $|E(H')|\ge |E(H)|-2|V(H)|$.  By Lemma~\ref{lemma-avgdeg}, the average degree of $H$ is at least $12$,
and $|E(H)|\ge 6|V(H)|$; consequently, $|E(H')|\ge 4|V(H)|=4|V(H')|$.  In particular, $E(H')\neq\emptyset$,
and thus $m\ge 2$.  By the minimality of $m$, it follows that $|V(H')|=|V(H)|<|B_m|$.
Therefore, $H'$ shows that $\widetilde{G}_{n,M}$ satisfies
$\eA_m$, which is a contradiction.
\end{proof}

We are now ready to prove the main result of this section.

\begin{proof}[Proof of Theorem~\ref{thm-hallr}]
We take $G=\widetilde{G}_{n,d}$ for a sufficiently large $4^d$-th power $n$.
If $H$ is a graph whose $1$-subdivision is contained in $G$, then $H$ is the union of (possibly null) vertex-disjoint graphs $H_A$ and $H_B$
whose $1$-subdivisions appear in $G$ with branch vertices in $A$ and in $B$, respectively.
By Lemmas~\ref{lemma-setsinb} and \ref{lemma-setsina}, we have $\alpha(H_A)\ge |V(H_A)|/13$ and $\alpha(H_B)\ge |V(H_B)|/18$,
and thus $$\alpha(H)=\alpha(H_A)+\alpha(H_B)\ge |V(H)|/18.$$
As the same argument applies to every subgraph of $H$, we conclude that $\rho(H)\le 18$.
\end{proof}

\section*{Acknowledgments}

The results on Hall ratio were inspired by discussions taking place during the first 
Southwestern German Workshop on Graph Theory in Karlsruhe.  We would like to thank the
reviewers, whose suggestions helped us improve the presentation of the paper.

\bibliographystyle{siam}
\bibliography{no1sub.bib}

\end{document}